\newtheorem{theorem}{Theorem}[section]
\newtheorem{cor}[theorem]{Corollary}
\newtheorem{drconj}[theorem]{Dyson's Rank Conjecture}
\newtheorem{prop}[theorem]{Proposition}
\theoremstyle{definition}
\newtheorem{definition}[theorem]{Definition}
\theoremstyle{remark}
\newtheorem*{remark}{Remark}
\numberwithin{equation}{section}
\newcommand{\SL}{\mbox{SL}}
\newcommand{\SLZ}{\mbox{SL}_2(\mathbb{Z})}
\newcommand\Lpar[1]{\left(#1\right)}
\newcommand\Fell[4]{\mathcal{F}_{#1}\left(\frac{#2}{#3};#4\right)}
\newcommand\SFell[4]{\mathcal{F}_{#1}^{*}\left(\frac{#2}{#3};#4\right)}
\newcommand\SFTk{\SFell{}{1}{p}{\frac{z+k}{p}}}
\newcommand\SFTZ{\SFell{}{1}{p}{\frac{z}{p}}}
\newcommand\Gell[4]{\mathcal{G}_{#1}\left(\frac{#2}{#3};#4\right)}
\newcommand\JSpz[1]{\mathcal{J}^{*}\left(\frac{1}{p};{#1}\right)}
\newcommand\twidit[1]{\overset {\text{\lower 3pt\hbox{$\sim$}}}#1}
\newcommand\dtwidit[1]{\overset {\text{\lower 6pt\hbox{$\sim$}}}#1}
\newcommand\Wtwid{\overset {\text{\lower 3pt\hbox{$\sim$}}}W}
\newcommand\gtwid{\overset {\text{\lower 3pt\hbox{$\sim$}}}g}
\newcommand\ttwid{\overset {\text{\lower 3pt\hbox{$\sim$}}}\theta}
\newcommand\mutwid{\overset {\text{\lower 3pt\hbox{$\sim$}}}\mu}
\newcommand\AMat{\begin{pmatrix} a & b \\ c & d \end{pmatrix}}
\newcommand\SMat{\begin{pmatrix} 0 & -1 \\ 1 & 0 \end{pmatrix}}
\newcommand\TMat{\begin{pmatrix} 1 & 1 \\ 0 & 1 \end{pmatrix}}
\newcommand\FL[1]{\left\lfloor#1\right\rfloor}
\newcommand\CL[1]{\left\lceil#1\right\rceil}
\newcommand\strokeb[2]{\,\left\arrowvert\,\left[#1\right]_#2\right.}
\newcommand\stroke[3]{#1\,\left\arrowvert\,\left[#2\right]_{#3}\right.}
\newcommand\ord{\mbox{ord}}         
\newcommand\ORD{\mbox{ORD}}
\newcommand\gA{A}   
\newcommand\Cpz[1]{\mathcal{C}_p\left(#1\right)}
\newcommand\thm[1]{\ref{thm:#1}}
\newcommand\corol[1]{\ref{cor:#1}}
\newcommand\propo[1]{\ref{propo:#1}}
\newcommand\eqn[1]{(\ref{eq:#1})}
\newcommand\sect[1]{\ref{sec:#1}}
\newcommand\refdef[1]{\ref{def:#1}}
\begin{document}
\newcommand{\beqs}{\begin{equation*}}
\newcommand{\eeqs}{\end{equation*}}
\newcommand{\beq}{\begin{equation}}
\newcommand{\eeq}{\end{equation}}
\title[Crank function symmetries]
{Transformation and symmetries for the Andrews-Garvan crank function}

\author{Rishabh Sarma}
\address{Department of Mathematics, University of Florida, Gainesville,
FL 32611-8105}

\email{rishabh.sarma@ufl.edu}

\subjclass[2010]{05A19, 11B65, 11F11, 11F37, 11P82, 11P83, 33D15}

\date{\today}

\keywords{Crank function, Dyson's rank function, modular forms, partitions}

\begin{abstract}
Let $R(z,q)$ be  the two-variable generating function of Dyson's rank function. In a recent joint work with Frank Garvan, we investigated the transformation of the elements of the $p$-dissection of $R(\zeta_p,q)$, where $\zeta_p$ is a primitive $p$-th root of unity, under special congruence subgroups of $SL_2(\mathbb{Z})$, leading us to interesting symmetry observations. In this work, we derive analogous transformation results for the two-variable crank generating function $C(z,q)$ in terms of generalized eta products. We consider the action of the group $\Gamma_0(p)$ on the elements of the $p$-dissection of $C(\zeta_p,q)$, leading us to new symmetries for the crank function. As an application, we give a new proof of the crank theorem predicted by Dyson in 1944 and resolved by Andrews and Garvan in 1988. Furthermore, we present identities expressing the elements of the crank dissection in terms of generalized eta products for primes $p=11,13,17$ and $19$.
\end{abstract}

\maketitle
\section{Introduction}
\label{sec:intro}

The rank function is an important statistic in the theory of partitions. It is defined as the largest part minus the number of parts in a partition. In 1944, Dyson gave this definition of the rank statistic and conjectured that it decomposes the partitions of $5n+4$ and $7n+5$ into five and seven equinumerous classes respectively, a conjecture that was proven 10 years later by Atkin and Swinnerton-Dyer. He also prophesied the existence of a ``crank" statistic for a similar explanation of the mod $11$ congruence of Ramanujan viz. $$p(11n+6) \equiv 0 \pmod{11},$$ and it wasn't until 1988 that the elusive crank was found by Andrews and Garvan \cite{An-Ga88}. The crank of a partition is defined as the largest part if there are no ones in the partition and otherwise the number of parts larger than the number of ones minus the number of ones. It is worth mentioning here that after the discovery of the partition crank, Garvan, Kim and Stanton \cite{Ga-Ki-St} came up with new statistics on partitions (also called cranks) which combinatorially prove Ramanujan's congruences for the partition function modulo $5$, $7$, $11$ and $25$, giving explicit bijections between equinumerous crank classes. However, the crank statistic that we work on in this paper is the ordinary partition crank defined by Andrews and Garvan which we simply refer to as the crank. Let $M(r,t,n)$ denote the number of partitions of $n$ with crank congruent to $r$ mod $t$. Then they show that 
\begin{align}
M(r,5,5n+4)&=\frac{1}{5} p(5n+4),\, 0\leq r \leq 4, \label{eq:crank5}
\\
M(r,7,7n+5)&=\frac{1}{7} p(7n+5),\, 0\leq r \leq 6, \label{eq:crank7}
\\
M(r,11,11n+6)&=\frac{1}{11} p(11n+6),\, 0\leq r \leq 10, \label{eq:crank11}
\end{align}
thus providing an almost combinatorial explanation of all three partition congruences of Ramanujan. Let $q=\exp(2\pi i \tau)$, where $\tau \in \mathbb{H}$. Let $M(m,n)$ denote the number of partitions of $n$ with crank $m$. Then the two variable generating function for the crank function, due to Andrews and Garvan (see \cite{An-Ga88}) is given by 

$$\sum_{n=0}^\infty \sum_{m\in \mathbb{Z}} M(m,n)\,z^m\,q^n = (1-z)q + \dfrac{(q;q)_{\infty}}{(zq;q)_{\infty}(z^{-1}q;q)_{\infty}}.$$
\\
Equivalently we define

$$C(z,q) :=\dfrac{(q;q)_{\infty}}{(zq;q)_{\infty}(z^{-1}q;q)_{\infty}}=1+(-1+z+z^{-1})q+\sum_{n=2}^\infty \sum_{m\in \mathbb{Z}} M(m,n)\,z^m\,q^n.$$

In this paper, we look at the transformation and symmetry of this function, when $z$ is a primitive $p$-th root of unity $\zeta_p$. We find that the crank function satisfies the same type of symmetries as the rank generating function, which was our subject of interest in \cite{Ga-Ri}.
\\\\
Throughout this paper we use the standard $q$-notation:
\begin{align*}
(a;q)_{\infty} &= \prod_{k=0}^\infty (1-aq^k),
\\
(a;q)_{n} &= \frac{(a;q)_{\infty}}{(aq^n;q)_{\infty}},
\\
(a_1,a_2,\dots,a_j;q)_{\infty} 
&= (a_1;q)_{\infty}(a_2;q)_{\infty}\dots(a_j;q)_{\infty}.
\end{align*}

We also define a class of generalized eta products and functions that we will use frequently in expressions for our crank identities and to study their transformations in the subsequent sections.

\begin{definition}
\label{def:geneta}
Following Biagioli (see \cite{Bi89}), define
\beq
f_{N,\rho}(z) := (-1)^{\FL{\rho/N}}q^{(N-2\rho)^2/(8N)}\,(q^\rho,q^{N-\rho},q^N;q^N)_\infty,
\label{eq:fdef}
\eeq
\noindent
where $N, \rho \in \mathbb{Z}$ with $N \geq 1$ and $N \nmid \rho$. 
\\\\
Then, for a vector $\overrightarrow{n}=(n_0,n_1,n_2,\cdots ,n_{\frac{p-1}{2}}) \in \mathbb{Z}^{\frac{1}{2}(p+1)}$, define 
\beq
j(z)=j(p,\overrightarrow{n},z)=\eta(pz)^{n_0}\prod\limits_{k=1}^{\frac{p-1}{2}}f_{p,k}(z)^{n_k},
\label{eq:jdef}
\eeq 
\end{definition}

We also define the weight $k$ slash operator on the set of meromorphic functions on $\mathbb{H}$ by $$f(z)|_k(\gamma):=(cz+d)^{-k}f(\gamma z)$$ for all $\gamma = \AMat \in \SL_2(\mathbb{Z})$ and $z \in \mathbb{H}$. We remind the reader that the automorphy factor $(cz+d)^k$ appears as a prefactor in the transformation property when $f$ is weakly modular of weight $k$.
\\\\
Let $N(m,n)$ denote the number of partitions of $n$ with rank $m$ and let $R(z,q)$ denote
the two-variable generating function for the Dyson rank function so that
$$
R(z,q) = \sum_{n=0}^\infty \sum_{m \in \mathbb{Z}} N(m,n)\,z^m\,q^n.
$$

The modularity of the rank generating function $R(z,q)$, where $z$ is replaced by a primitive $p$-th root of unity for a general prime $p$ was first studied by Bringmann and Ono \cite{Br-On10}. Building on the groundbreaking work of Zwegers \cite{Zw00} and \cite{Zw-thesis} around the turn of the century, who realised how Ramanujan's mock theta functions occur naturally as the holomorphic parts of certain real analytic modular forms, Bringmann and Ono showed that $q^{-1/24}R(\zeta_p,q)$ is the holomorphic part a weak Maass form of weight $\frac{1}{2}$ on $\Gamma_1(576p^4)$, where the non-holomorphic part is a Mordell integral of a theta function. Garvan \cite{Ga19}, in 2019, observed that the introduction of a simple multiplier and a generalized correction series factor reduces the modularity of the rank function to larger and more natural congruence subgroups of $\SL_2(\mathbb{Z})$.
\\\\
In a subsequent joint work with Garvan \cite{Ga-Ri}, we generalized and improved these results of his paper \cite{Ga19} on transformations for Dyson's rank function leading us to exciting observations of symmetry among the elements of dissection of the rank generating function. In this paper, we try to implement our ideas in \cite{Ga-Ri} and those of Garvan \cite{Ga19} to obtain analogous results for the crank generating function.
\\\\\
Since its discovery, it has been well established that the crank statistic plays a central role in the theory of partitions. Perhaps the most important breakthrough came in 2005 when Mahlburg showed that the crank itself satisfies Ramanujan-type congruences using Hecke's theory of modular forms. In his seminal paper, Mahlburg establishes congruences for infinitely many non-nested arithmetic progressions modulo powers of primes by tying the crank generating function to the Dedekind eta function and Klein forms. Rewriting his functions in the framework of generalized eta products, we have the following result.
\\
\begin{theorem}[{Mahlburg \cite[Corollary 3.3]{Mah}}]
\label{thm:transthm}
Let $p>3$ be prime, suppose $1\le\ell\le(p-1)$, and define
\begin{equation*}
\Fell{}{\ell}{p}{z} := \dfrac{\eta(z)^2}{E_{0,\ell}(z)},
\end{equation*}
where, following Yang \cite{Yang04}, we consider the generalized eta product $$E_{0,\ell}(z)=q^\frac{1}{12}\prod\limits_{m=1}^{\infty}(1-\zeta_p^{\ell}q^{m-1})(1-\zeta_p^{-\ell}q^{m}).$$
Then $\Fell{}{\ell}{p}{z}$ is a weakly holomorphic modular form of weight 1 on the congruence subgroup $\Gamma_1(2p^2)$.
\end{theorem}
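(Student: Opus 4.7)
The plan is to recognise $E_{0,\ell}(z)$ as a scalar multiple of a Klein--Siegel form at the $p$-division point $\ell/p$, so that the modularity of $\Fell{1}{\ell}{p}{z}=\eta(z)^2/E_{0,\ell}(z)$ reduces to a computation involving only the classical Dedekind eta and Jacobi theta functions. First, by the Jacobi triple product identity
$$
\theta_1(z;w)=-2q^{1/8}\sin(\pi w)\prod_{m=1}^{\infty}(1-q^m)(1-q^me^{2\pi iw})(1-q^me^{-2\pi iw}),
$$
a direct comparison of infinite products gives
$$
E_{0,\ell}(z)=\frac{(1-\zeta_p^{\ell})\,\theta_1(z;\ell/p)}{-2i\sin(\pi\ell/p)\,\eta(z)},
$$
so $\Fell{1}{\ell}{p}{z}$ is, up to a nonzero constant, $\eta(z)^3/\theta_1(z;\ell/p)$. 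This identification already gives holomorphy on $\mathfrak{h}$, since $\theta_1(z;\ell/p)$ has no zeros in the upper half plane when $\ell/p\notin\mathbb{Z}+z\mathbb{Z}$.

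Next, for $A=\AMat\in\Gamma_0(p^2)$ I would apply the standard transformation
$$
\theta_1\!\left(\tfrac{az+b}{cz+d};\tfrac{w}{cz+d}\right)=\nu_{\theta}(A)\,(cz+d)^{1/2}\exp\!\left(\tfrac{\pi i c w^2}{cz+d}\right)\theta_1(z;w),
$$
with $w=\ell/p$, combined with the quasi-periodicities $\theta_1(z;w+1)=-\theta_1(z;w)$ and $\theta_1(z;w+z)=-e^{-\pi i(2w+z)}\theta_1(z;w)$, to return the argument $a\ell/(p(cz+d))$ to $d\ell/p$ modulo $\mathbb{Z}+z\mathbb{Z}$; here $p\mid c$ and $ad\equiv 1\pmod p$ are used. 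This yields a formula of the form $E_{0,\ell}(Az)=\varepsilon_{\ell}(A)\,E_{0,d\ell}(z)$ with $\varepsilon_\ell(A)$ a root of unity. Combining with Theorem \ref{thm:Knetamult} applied to $\eta(z)^2$, one obtains
$$
\Fell{1}{\ell}{p}{Az}=\mu_{\ell}(A)\,(cz+d)\,\Fell{1}{d\ell}{p}{z},
$$
with $\mu_{\ell}(A)=\nu_\eta(A)^2/\varepsilon_{\ell}(A)$ a root of unity.

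The main obstacle is showing $\mu_{\ell}(A)=1$ for every $A\in\Gamma_1(2p^2)$. For such $A$, write $a=1+2p^2a'$, $d=1+2p^2d'$, $c=2p^2c'$; the congruence $d\equiv 1\pmod p$ gives $d\ell\equiv\ell\pmod p$, so $E_{0,d\ell}=E_{0,\ell}$ and the index ambiguity disappears. Each of the remaining contributions to $\mu_{\ell}(A)$ then collapses: the Jacobi-symbol part of $\nu_\eta(A)$, being squared, is trivial; the Knopp exponential from Theorem \ref{thm:Knetamult}, after squaring, reduces to a phase whose argument lies in $2\pi i\mathbb{Z}$ modulo $2p^2$; and the quadratic theta phase $\exp(\pi ic\ell^2/p^2)=\exp(2\pi ic'\ell^2)$ is trivially $1$. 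The factor $2$ in $2p^2$ (rather than just $p^2$) is forced precisely by this quadratic phase and by the $2$-torsion inherent in the eta multiplier system. Weakly-holomorphic behaviour at the cusps is then automatic, since $\eta$ and $\theta_1(\cdot;\ell/p)$ each have well-defined finite orders at every cusp of $\Gamma_1(2p^2)$, whence their ratio $\Fell{1}{\ell}{p}{z}$ is the required weight-$1$ weakly holomorphic modular form.
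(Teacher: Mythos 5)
Your overall route---rewriting $E_{0,\ell}(z)$ via the Jacobi triple product as a constant multiple of $\theta_1(z;\ell/p)/\eta(z)$, so that $\Fell{1}{\ell}{p}{z}$ becomes a multiple of $\eta(z)^3/\theta_1(z;\ell/p)$, and then transforming numerator and denominator separately---is viable, and it is essentially Mahlburg's original Klein-form argument. It is, however, not what this paper does: the paper quotes Theorem \thm{transthm} from Mahlburg without proof, and its own machinery (Theorem \thm{F1trans1} and Corollary \corol{F1trans2}) instead derives the transformation of $\Fell{1}{\ell}{p}{z}$ directly from Yang's transformation formulas for the generalized eta functions $E_{g,h}$, obtaining the sharper conclusion that $\Fell{1}{\ell}{p}{z}$ is invariant under all of $\Gamma_0(p^2)\cap\Gamma_1(p)$, a group strictly containing $\Gamma_1(2p^2)$. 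In particular your closing assertion that the factor $2$ in the level is ``forced'' is incorrect: the parity computation in Corollary \corol{F1trans2}, namely $p\ell(d-1)+cd\ell^2-c\ell p\equiv \ell(c+1)(d+1)\equiv 0\pmod 2$, shows that no factor of $2$ is needed.

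The genuine gap is in your verification that $\mu_\ell(A)=1$. You enumerate the contributions as if $\nu_\eta(A)^2$ had to be trivial on $\Gamma_1(2p^2)$ by itself (``the Jacobi-symbol part, being squared, is trivial; the Knopp exponential, after squaring, reduces to a phase whose argument lies in $2\pi i\mathbb{Z}$''). That claim is false as stated: already for $T=\begin{pmatrix}1&1\\0&1\end{pmatrix}\in\Gamma_1(2p^2)$ one has $\nu_\eta(T)^2=e^{\pi i/6}\neq 1$, matched only by the compensating factor $E_{0,\ell}(z+1)=e^{\pi i/6}E_{0,\ell}(z)$. The correct mechanism is cancellation, not triviality of the pieces: since $\nu_{\theta_1}=\nu_\eta^3$, the entire eta-type multiplier of $\eta(z)^3$ cancels against that of $\theta_1(z;\ell/p)$, and the residual multiplier of the quotient consists only of the explicit exponential phase $e^{\pi i c(cz+d)\ell^2/p^2}$ from the theta transformation together with the signs and phases arising from the quasi-periodicity used to bring the elliptic argument $(c\ell/p)z+d\ell/p$ back to $\overline{d\ell}/p$. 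The $z$-dependent parts of these phases cancel, leaving precisely the constant $\mu(A,\ell)=\exp\bigl(\pi i\bigl(\tfrac{\ell(d-1)}{p}+\tfrac{cd\ell^2}{p^2}-\tfrac{c\ell}{p}\bigr)\bigr)$ of Theorem \thm{F1trans1}; only at that point do the congruences $c\equiv 0$ and $d\equiv 1\pmod{2p^2}$ (or the weaker hypotheses of Corollary \corol{F1trans2}) force each exponent to be an even integer. As written, your argument asserts the vanishing of quantities that do not individually vanish and omits the cancellation that actually carries the proof, so the key step does not go through without being redone along these lines.
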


Mahlburg further goes on to establish a rescaled form of the crank generating function as the sum of a weakly holomorphic function on $\Gamma_1(2p^2)$ and a modular form on $\Gamma_0(p)$ of integral weights (\cite{Mah}, Section 4). We show that after multiplying with an appropriate eta product and a power of $q$, we can deduce the modularity of the crank generating function to a simpler congruence subgroup of $\SL_2(\mathbb{Z})$, resembling the result for the rank generating function \cite[Theorem 1.2]{Ga19}.

\begin{theorem}
\label{thm:modularthmintro}
Let $p > 3$ be prime. Then the function 
$$
\eta(p^2 z) \, \Cpz{z} 
$$
is a weakly holomorphic modular form of weight $1$ on the group $\Gamma_0(p^2) \cap \Gamma_1(p)$, where $\Cpz{z}=q^{\frac{-1}{24}}C(\zeta_p^{\ell},q)$.
\end{theorem}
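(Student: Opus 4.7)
The plan is to relate $\Cpz{z}$ to Mahlburg's function $\Fell{1}{\ell}{p}{z}$ of Theorem \thm{transthm} by means of a short algebraic identity, and then to upgrade the ambient congruence subgroup from $\Gamma_1(2p^2)$ up to $\Gamma_0(p^2)\cap\Gamma_1(p)$ by combining it with the eta quotient $\eta(p^2z)/\eta(z)$ while carefully tracking multiplier systems via Theorems \thm{Knetamult} and \thm{thetatrans}. First I would unfold the definition of $E_{0,\ell}(z)$: isolating the $m=1$ factor of its first product yields
$$E_{0,\ell}(z) = q^{1/12}(1-\zeta_p^\ell)(\zeta_p^\ell q;q)_\infty(\zeta_p^{-\ell}q;q)_\infty,$$
so that
$$\Fell{1}{\ell}{p}{z} = \frac{\eta(z)^2}{E_{0,\ell}(z)} = \frac{(q;q)_\infty}{1-\zeta_p^\ell}\,C(\zeta_p^\ell,q).$$
Using $\eta(z)=q^{1/24}(q;q)_\infty$, this rearranges to the key identity
\begin{equation}
\eta(p^2z)\,\Cpz{z} = (1-\zeta_p^\ell)\,\frac{\eta(p^2z)}{\eta(z)}\,\Fell{1}{\ell}{p}{z}.\label{eq:cplan}
\end{equation}
Mahlburg's Theorem \thm{transthm} then provides the initial modularity: $\Fell{1}{\ell}{p}{z}$ is a weakly holomorphic modular form of weight $1$ on $\Gamma_1(2p^2)$, and since $\eta$ never vanishes on $\mathfrak{h}$ the eta quotient $\eta(p^2z)/\eta(z)$ is holomorphic on $\mathfrak{h}$ with only finite-order zeros/poles at the cusps, so the right-hand side of \eqref{eq:cplan} is weakly holomorphic of weight $1$ on some subgroup of finite index.

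The substantive step is upgrading the level from $\Gamma_1(2p^2)$ to $\Gamma_0(p^2)\cap\Gamma_1(p)$. For $A=\AMat$ in the larger group, I would apply Theorem \thm{Knetamult} to both $\eta(z)$ and $\eta(p^2z)$, obtaining
$$\stroke{\eta(z)}{A}{\tfrac12}=\nu_\eta(A)\,\eta(z), \qquad \stroke{\eta(p^2z)}{A}{\tfrac12}=\nu_\eta\!\left(\AppMat\right)\eta(p^2z),$$
so that $\eta(p^2z)/\eta(z)$ transforms by the explicit Jacobi-symbol/root-of-unity factor $\mu(A):=\nu_\eta\!\left(\AppMat\right)/\nu_\eta(A)$. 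I would then fix a set $\Sigma$ of right coset representatives for $\Gamma_1(2p^2)\backslash(\Gamma_0(p^2)\cap\Gamma_1(p))$ and, for each $\sigma\in\Sigma$, compute the multiplier $\nu_{\mathcal{F}_1}(\sigma)$ of $\Fell{1}{\ell}{p}{z}$ under $\sigma$ via Biagioli's Theorem \thm{thetatrans} applied to an expression of $E_{0,\ell}(z)$ as a Klein form / Yang-style generalized eta product. The theorem then follows from the finite character identity
$$\mu(\sigma)\,\nu_{\mathcal{F}_1}(\sigma)=1 \qquad \text{for every } \sigma\in\Sigma,$$
which forces the right-hand side of \eqref{eq:cplan} to transform with trivial multiplier on $\Gamma_0(p^2)\cap\Gamma_1(p)$; weak holomorphicity passes to the product since each of the three factors is weakly holomorphic.

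The main obstacle is precisely this character identity. It reduces to a finite but delicate computation in Jacobi symbols and roots of unity, using Knopp's explicit formula with its split between the $c$-odd and $d$-odd cases, combined with the Klein-form transformation underlying $E_{0,\ell}(z)$. Structurally the calculation is entirely parallel to Garvan's treatment of the rank analogue in the proof of Theorem \thm{mainJp0}, and indeed the present statement is the exact crank mirror of that theorem; the real labour is to replay each of his multiplier computations in the crank setting, where the product $(\zeta_p^\ell q;q)_\infty (\zeta_p^{-\ell}q;q)_\infty$ plays the role that the rank's modular correction terms $\Phi_{p,a}$ played in the rank story.
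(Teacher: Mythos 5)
Your key identity is exactly the paper's equation \eqn{cranketaeq}: isolating the $m=1$ factor of $E_{0,\ell}(z)$ to get $\eta(p^2z)\,\Cpz{z}=(1-\zeta_p^\ell)\,\frac{\eta(p^2z)}{\eta(z)}\,\Fell{1}{\ell}{p}{z}$ is precisely how the paper sets up the proof, and your treatment of the eta quotient $\eta(p^2z)/\eta(z)$ as a modular function on $\Gamma_0(p^2)$ matches as well. The divergence, and the gap, is in how the level $\Gamma_0(p^2)\cap\Gamma_1(p)$ is actually established for $\Fell{1}{\ell}{p}{z}$. You defer this to a ``finite character identity'' over coset representatives of $\Gamma_1(2p^2)$ in $\Gamma_0(p^2)\cap\Gamma_1(p)$, starting from Mahlburg's Theorem \thm{transthm}, and you never carry out or correctly reduce that computation. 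Worse, the tool you name for it is Biagioli's Theorem \thm{thetatrans}, which transforms the functions $f_{N,\rho}(z)=(-1)^{\FL{\rho/N}}q^{(N-2\rho)^2/(8N)}(q^\rho,q^{N-\rho},q^N;q^N)_\infty$ among themselves; these carry no root of unity and the theorem says nothing about $E_{0,\ell}(z)$, whose defining product involves $\zeta_p^{\pm\ell}$. The applicable transformation law is Yang's Theorem \thm{genetaYang}(iii)--(iv) for $E_{g,h}$, which the paper uses to prove Theorem \thm{F1trans1}: for any $A\in\Gamma_0(p)$ one gets the closed-form multiplier $\mu(A,\ell)=\exp\bigl(\pi i\bigl(\tfrac{\ell(d-1)}{p}+\tfrac{cd\ell^2}{p^2}-\tfrac{c\ell}{p}\bigr)\bigr)$ together with the index shift $\ell\mapsto\overline{d\ell}$, and Corollary \corol{F1trans2} checks in two lines that this multiplier is $1$ (and the index is fixed) when $c\equiv 0\pmod{p^2}$ and $a\equiv d\equiv 1\pmod p$. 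That single direct computation replaces both your appeal to Mahlburg and your coset-representative climb, which are unnecessary once the multiplier is known on all of $\Gamma_0(p)$.

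Since the character identity is the entire analytic content of the level claim and your proposal neither proves it nor points to a result that would prove it, the argument as written is incomplete. The fix is mechanical: replace the Biagioli citation by Yang's transformation formula, compute $\mu(A,\ell)$ as in Theorem \thm{F1trans1}, and verify its triviality on $\Gamma_0(p^2)\cap\Gamma_1(p)$ directly. Your closing analogy is also slightly off: the correction series $\Phi_{p,a}$ in the rank case compensate for the mock behaviour of $R(\zeta_p,q)$ and have no counterpart here precisely because $C(\zeta_p^\ell,q)$ is already an honest eta-type product; nothing in the crank argument ``plays their role.''
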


This result is analogous to the modularity of the rank generating function that was studied in \cite{Ga19}. There the author also considered the modularity and transformation of elements of the $p$-dissection of the rank generating function. We note that in the definition for these rank dissection elements $\mathcal{K}_{p,m}(\zeta_p^d;z)$, there are two cases pertaining to whether $-24m$ is a quadratic residue or non-residue modulo $p$ {\cite[Definition 1.4]{Ga-Ri}}. In case of the crank generating function, we have the following analogous expression for elements of the dissection and subsequent modularity results. These dissection elements do not feature the correction factor involving the series $\Phi_{p,a}(q)$ as in the definition of the rank dissection elements $\mathcal{K}_{p,m}(\zeta_p^d;z)$ where the series $\Phi_{p,a}(q)$ appears in the case when $-24m$ is a quadratic residue modulo $p$ {\cite[Definition 1.4 (ii)]{Ga-Ri}}.
\begin{definition} 
\label{def:KCpm}
For $p>3$ prime, $s_p=\frac{1}{24}(p^2-1)$, $0 \le m \le p-1$ and $1 \le \ell \le p-1$, define
\begin{equation*}
\mathcal{S}_{p,m}^{}(\zeta_p^{\ell},z) := q^{m/p}\,\prod_{n=1}^\infty (1-q^{pn})\, 
\sum_{n=\CL{\frac{1}{p}(s_p-m)}}^\infty \left(\sum_{k=0}^{p-1} M(k,p,pn + m -s_p)\,\zeta_p^{k \ell}\right)q^n.
\end{equation*}
\label{eq:KCpm}
\end{definition}

\begin{theorem}
\label{thm:Kpmthmintro}
Let $p>3$ be prime and suppose $0 \le m \le p-1$. Then 
\begin{enumerate}
\item[(i)]
$\mathcal{S}_{p,0}^{}(\zeta_p,z)$ is a weakly holomorphic modular form of weight $1$ on
$\Gamma_1(p)$.
\item[(ii)]
If $1 \le m \le (p-1)$ then $\mathcal{S}_{p,m}(\zeta_p,z)$ is a weakly holomorphic modular form of weight $1$ on
$\Gamma(p)$.                           
In particular,
$$
\stroke{\mathcal{S}_{p,m}(\zeta_p,z)}{A}{1}
= \exp\Lpar{\frac{2\pi ibm}{p}}\,\mathcal{S}_{p,m}(\zeta_p,z),
$$
for $A=\AMat\in\Gamma_1(p)$.                       
\end{enumerate}
\end{theorem}

Analogous to the rank, the action of the congruence subgroup $\Gamma_0(p)$ on $\mathcal{S}_{p,m}(\zeta_p,z)$ leads to observation of symmetry among the elements of the crank dissection. 

\begin{theorem}
\label{thm:symmthm1intro}
Let $p>3$ be prime, $0 \le m \le p-1$, and $1 \le d \le p-1$.             
Then\\
\beq
\stroke{\mathcal{S}_{p,m}(\zeta_p, z)}{A}{1}
= \frac{\sin(\pi/p)}{\sin(d\pi/p)} \, (-1)^{d+1}\,\zeta_p^{mak}
\,\mathcal{S}_{p,{ma^2}}^{}(\zeta_p^d,z),
\label{eq:KpmAtrans}
\eeq
assuming $1 \le a,d \le (p-1)$ 
and
$$
A = \begin{pmatrix}
      a & k \\ p & d 
    \end{pmatrix}
\in \Gamma_0(p).
$$
\end{theorem}

Note : When $m$ is a quadratic residue/non-residue modulo $p$, the same is correspondingly true for ${ma^2}$.
\\\\
The $m=0$ element of the dissection viz. ${\mathcal{S}_{p,0}^{}(\zeta_p, z)}$ defined above is also of further special interest. A beautiful symmetry among the zeta-coefficients in the identity for ${\mathcal{K}_{p,0}^{}(\zeta_p, z)}$ element of the rank dissection expressed in terms of generalized eta functions $j(p,\overrightarrow{n},z)$ was given in \cite{Ga-Ri}. Using our main symmetry result in Theorem \thm{symmthm1intro} above and the conditions for modularity of the generalized eta functions \cite[Theorem 3.1]{Ga-Ri}, we can determine the transformation of $\mathcal{S}_{p,0}^{}(\zeta_p, z)$ and find that the exact same symmetry result holds for this crank counterpart of the $m=0$ element. We do not pursue this here,  but the interested reader could investigate further (see \cite{Ga-Ri}, Theorem 5.1 for details).
\\\\
The paper is organized as follows. In Section \sect{prelims} we build up the framework of our results and introduce a class of eta functions arising in the definition of our completed crank function and determine their transformation under special congruence subgroups of $\SL_2(\mathbb{Z})$. In Section \sect{modsymmres}, we prove our main results on the modularity, transformation and symmetry of the crank function. Section \sect{cuspords} is devoted to calculating lower bounds for the orders of $\mathcal{S}_{p,m}(\zeta_{p},z)$ at the cusps of $\Gamma_1(p)$ which we utilize to prove identities in the subsequent section. Finally, in Section \sect{crankids}, we give a new proof of the crank theorem and also state identities for $\mathcal{S}_{p,m}(\zeta_{p},z)$ when $p=13,17$ and $19$ in terms of generalized eta-functions, proving two such identities mod $13$ using the Valence formula, the modularity conditions and the orders at cusps determined in the previous section.\\

\section{Preliminary definitions and Results}
\label{sec:prelims}

In Section \sect{intro}, we defined $M(r,t,n)$ to be the number of partitions of $n$ with crank
congruent to $r$ mod $t$, and let $\zeta_p=\exp(2\pi i/p)$. Then we have

$$C(\zeta_p,q) = \sum_{n=0}^\infty \Lpar{\sum_{k=0}^{p-1} M(k,p,n)\,\zeta_p^k}\,q^n.$$
\\
We introduce a class of generalized eta products and its transformation under matrices in $\SL_2(\mathbb{Z})$ due to Yifan Yang. We later show how it relates to the crank function.

\begin{theorem}[Yang \cite{Yang04}] 
\label{thm:genetaYang}
Let $p$ be prime and $g,h$ be arbitrary real numbers not simultaneously congruent to $0$ modulo $p$. For $z \in \mathbb{H}$, we define the generalized Dedekind eta functions $E_{g,h}(z)$ by
$$E_{g,h}(z)=q^{B(g/p)/2}\prod\limits_{m=1}^{\infty}\left(1-\zeta_p^hq^{m-1+\frac{g}{p}}\right)\left(1-\zeta_p^{-h}q^{m-\frac{g}{p}}\right),$$
where $B(x)=x^2-x+1/6$. Then the following transformations hold :
\begin{enumerate}
\item[i)]   $E_{g+p,h}=E_{-g,-h}=-\zeta_p^{-h}E_{g,h}$.
\item[ii)]  $E_{g,h+p}=E_{g,h}$.
\\
Moreover for $\gamma=\AMat \in \SL_2(\mathbb{Z})$, we have
\item[iii)] $E_{g,h}(z+b)=e^{\pi i b B(g/p)}E_{g,bg+h}(z)$ for $c=0$.
\item[iv)]  $E_{g,h}(\gamma z)=\varepsilon(a,b,c,d)e^{\pi i \delta}E_{g',h'}(z)$ for $c\neq 0$ where 
\\
\begin{align*}
\varepsilon(a,b,c,d)&=\begin{cases}
e^{\pi i (bd(1-c^2)+c(a+d-3))/6} & \mbox{if $c$ is odd},
\\\\
-i e^{\pi i (ac(1-d^2)+d(b-c+3))/6} & \mbox{if $d$ is odd},
\end{cases}
\\\\
\delta&=\frac{g^2ab+2ghbc+h^2cd}{p^2}-\frac{gb+h(d-1)}{p},~and
\\\\
(g'\,\,\,\,h')&=(g\,\,\,\,h)\AMat.
\end{align*}
\end{enumerate}
\end{theorem}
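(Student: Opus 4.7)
The plan is to dispatch parts (i), (ii), (iii) by direct manipulation of the defining infinite product, and then to establish the genuinely analytic transformation in (iv) by reducing to the known modular behavior of the classical Dedekind eta function and the Jacobi theta function under $\SL_2(\mathbb{Z})$.

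Parts (i) and (ii) are essentially bookkeeping. For (ii), $\zeta_p^{h+p}=\zeta_p^h$ leaves every factor in the product unchanged. For (i), replacing $g$ by $g+p$ shifts $g/p$ by $1$, which peels the $m=1$ factor $(1-\zeta_p^h q^{g/p})$ off the first product and prepends an $m=0$ factor $(1-\zeta_p^{-h}q^{-g/p})$ to the second; the identity $1-\zeta_p^{-h}q^{-g/p}=-\zeta_p^{-h}q^{-g/p}(1-\zeta_p^h q^{g/p})$, combined with the change $B(g/p+1)=B(g/p)+2g/p$ in the prefactor, yields the multiplier $-\zeta_p^{-h}$; the equality $E_{-g,-h}=-\zeta_p^{-h}E_{g,h}$ follows by swapping the two products. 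For (iii), substituting $z\mapsto z+b$ with $b\in\mathbb{Z}$ keeps $q$ invariant but shifts $q^{g/p}$ by $\zeta_p^{bg}$, so every factor $\zeta_p^{\pm h}q^{\pm g/p}$ picks up $\zeta_p^{\pm bg}$; combined with the prefactor contribution $e^{\pi i b B(g/p)}$ this is exactly $E_{g,bg+h}(z)$.

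The hard case is (iv). The cleanest approach is to express $E_{g,h}(z)$ in terms of the Jacobi theta function $\theta_1(v;z)$ via the classical triple product
\begin{equation*}
\theta_1(v;z) = 2q^{1/8}\sin(\pi v)\prod_{n\geq 1}(1-q^n)(1-e^{2\pi i v}q^n)(1-e^{-2\pi i v}q^n),
\end{equation*}
evaluated at $v=(gz+h)/p$. Peeling off the $m=1$ factor from the first product in the definition of $E_{g,h}$ and comparing with the $\sin(\pi v)$ prefactor yields a relation of the schematic form $E_{g,h}(z)=(\text{phase})\cdot\theta_1((gz+h)/p;z)/\eta(z)$. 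The $\SL_2(\mathbb{Z})$ transformation then follows by combining the Dedekind eta transformation (Theorem \thm{Knetamult}), which is responsible for the multiplier $\varepsilon(a,b,c,d)$ in its two parity-sensitive forms, with the standard theta transformation
\begin{equation*}
\theta_1\!\left(\frac{v}{cz+d};\,\gamma z\right)=\varepsilon_\theta(\gamma)\,(cz+d)^{1/2}\,e^{\pi i c v^2/(cz+d)}\,\theta_1(v;z).
\end{equation*}
Setting $v=(g'z+h')/p$ with $(g',h')=(g,h)\gamma$ so that $v/(cz+d)$ becomes $(g\cdot\gamma z+h)/p$, one expands the exponent $\pi i c v^2/(cz+d)$ using $ad-bc=1$: the quadratic piece in $g,h$ collapses to $(g^2ab+2ghbc+h^2cd)/p^2$, and the residual linear piece combines with the normalization coming from $\theta_1/\eta$ to produce $-(gb+h(d-1))/p$, reproducing $\delta$.

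The principal obstacle is the careful bookkeeping of roots of unity: the eta and theta multipliers each involve a $24$th root of unity that depends on the parities of $c$ and $d$, and both carry square-root branches that must be combined consistently to yield the unified two-case formula for $\varepsilon(a,b,c,d)$. A robust way to verify the final exponent and multiplier is to check the formula on the generators $T=\TMat$ and $S=\SMat$ of $\SL_2(\mathbb{Z})$ (where (iii) already handles $T$ and the Jacobi imaginary transformation of $\theta_1$, equivalently Poisson summation applied to the product, handles $S$) and then to extend to all of $\SL_2(\mathbb{Z})$ via the cocycle property of the multiplier systems.
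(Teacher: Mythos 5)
The paper does not prove this theorem: it is imported verbatim from Yang \cite{Yang04} as a cited black box, so there is no in-paper argument to compare yours against. Judged on its own terms, your route is the standard one (and is essentially how such transformation laws are derived in the literature on Siegel functions and Klein forms): parts (i)--(iii) are complete and correct as written --- the factor $1-\zeta_p^{-h}q^{-g/p}=-\zeta_p^{-h}q^{-g/p}(1-\zeta_p^{h}q^{g/p})$ together with $B(x+1)=B(x)+2x$ does give the multiplier $-\zeta_p^{-h}$, and the shift $z\mapsto z+b$ computation is immediate. For (iv) your blueprint is sound: writing $E_{g,h}(z)=-i\,\zeta_p^{h/2}q^{g^2/(2p^2)}\,\theta_1\bigl((gz+h)/p;z\bigr)/\eta(z)$, the identity $g'z+h'=(cz+d)\bigl(g\cdot\gamma z+h\bigr)$ for $(g'\ h')=(g\ h)\gamma$ makes the exponent $\pi i c v^2/(cz+d)$ combine with the two $q^{g^2/(2p^2)}$-type prefactors into exactly $\pi i(g^2ab+2ghbc+h^2cd)/p^2$ (a short computation using $ad-bc=1$), and the linear term $-(gb+h(d-1))/p$ is precisely $(h-h')/p$ from the $\zeta_p^{h/2}$ normalizations, as you indicate. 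The one place where your write-up is a sketch rather than a proof is the identification of $\varepsilon(a,b,c,d)$ with $\nu_{\theta_1}(\gamma)/\nu_\eta(\gamma)=\nu_\eta(\gamma)^2$ in both parity cases of Theorem \thm{Knetamult}; this is where all the content of the two-case formula lives, and it should either be checked directly from Knopp's formula (the Jacobi symbols square to $1$, so it is a finite verification) or via your generator-plus-cocycle argument, in which case you must also verify that $\varepsilon(a,b,c,d)e^{\pi i\delta}$ together with $(g,h)\mapsto(g,h)\gamma$ actually satisfies the cocycle relation for the family $\{E_{g,h}\}$. With that bookkeeping supplied, the proof is complete.
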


We now use Yang's transformation results to find the transformation of our function $\Fell{}{\ell}{p}{z}$ defined below under $\Gamma_0(p)$.

\begin{prop}
\label{propo:F1trans1}
Let $p>3$ be prime, suppose $1\le\ell\le(p-1)$, and define
\begin{equation*}
\Fell{}{\ell}{p}{z} := \dfrac{\eta(z)^2}{E_{0,\ell}(z)},
\end{equation*}
where, following Yang \cite{Yang04}, we consider the generalized eta product $$E_{0,\ell}(z)=q^\frac{1}{12}\prod\limits_{m=1}^{\infty}(1-\zeta_p^{\ell}q^{m-1})(1-\zeta_p^{-\ell}q^{m}).$$
Then
\begin{equation*}
\stroke{\Fell{}{\ell}{p}{z}}{A}{1} = \mu(A,\ell)\,
\Fell{}{\overline{d\ell}}{p}{z},
\end{equation*}
where
$$\mu(A,\ell) = \exp\left(\pi i \left(\frac{\ell(d-1)}{p}+\frac{c d \ell^2}{p^2}-\frac{c \ell}{p}\right)\right) ,~\text{and}~ A = \AMat \in \Gamma_0(p).$$
Here $\overline{m}$ is the least nonnegative residue of $m\pmod{p}$.
\end{prop}

\begin{proof}
Using Theorems \thm{genetaYang} and {\cite[Theorem 2, p.51]{Kn-book}}, we have
\\
\begin{align*}
\stroke{\Fell{}{\ell}{p}{z}}{A}{1} &=\quad
\begin{cases}
\dfrac{e^{\frac{2\pi i b}{12}}\eta(z)^2}{e^{\frac{\pi i b}{6}}E_{0,\ell}(z)} & \mbox{if $c=0$},
\\\\
 \dfrac{e^{\frac{\pi i}{6}\left[(a+d)c-bd(c^2-1)-3c\right]}\eta(z)^2}{e^{\frac{\pi i}{6}\left[bd(1-c^2)+c(a+d-3)\right]} e^{\pi i\left[\frac{\ell^2cd}{p^2}-\frac{\ell(d-1)}{p}\right]}E_{\ell c,\ell d}(z)} & \mbox{if $c\neq 0$ and odd},
\\\\
\dfrac{e^{\frac{\pi i}{6}\left[(a+d)c-bd(c^2-1)-3c\right]}\eta(z)^2}{-i e^{\frac{\pi i}{6}\left[bd(1-c^2)+c(a+d-3)\right]} e^{\pi i\left[\frac{\ell^2cd}{p^2}-\frac{\ell(d-1)}{p}\right]}E_{\ell c,\ell d}(z)} & \mbox{if $c\neq 0$ and even},
\end{cases}
\\\\
&=e^{\pi i\left[\frac{\ell(d-1)}{p}-\frac{\ell^2cd}{p^2}\right]}\dfrac{\eta(z)^2}{E_{\ell c,\ell d}(z)}.
\\
\end{align*}
Now, using Theorem \thm{genetaYang} we have $E_{\ell c,\ell d}(z)=\left(-\zeta_p^{-\ell d}\right)^{\frac{\ell c}{p}}E_{0,\ell d}(z)=e^{\pi i\left[\frac{c\ell}{p}-\frac{2\ell^2 c d}{p^2}\right]}E_{0,\overline{d\ell}}(z)$.
\\\\
Then, $$\stroke{\Fell{}{\ell}{p}{z}}{A}{1} = e^{\pi i \left[\frac{\ell(d-1)}{p}+\frac{c d \ell^2}{p^2}-\frac{c \ell}{p}\right]}\dfrac{\eta(z)^2}{E_{0,\overline{d \ell}}(z)} $$
\\
and we have our result.
\end{proof}

The following corollary now follows easily and gives a simpler transformation.

\begin{cor} 
\label{cor:F1trans2}
Let $p>3$ be prime and suppose $1\leq \ell \leq \frac{1}{2}(p-1)$. Then
\begin{equation*}
\stroke{\Fell{}{\ell}{p}{z}}{A}{1} = \Fell{}{\ell}{p}{z},
\end{equation*}
where $A \in \Gamma_0(p^2)\cap \Gamma_1(p).$
\end{cor}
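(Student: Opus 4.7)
The plan is to invoke Theorem \thm{F1trans1} directly, since $\Gamma_0(p^2)\cap\Gamma_1(p)\subset\Gamma_0(p)$, and then show that both the ``label'' of the output theta function and the overall multiplier reduce trivially. Concretely, I want to verify two things for $A=\AMat\in\Gamma_0(p^2)\cap\Gamma_1(p)$: first, that $\overline{d\ell}=\ell$, so that the transformed function is again $\Fell{1}{\ell}{p}{z}$; and second, that $\mu(A,\ell)=1$.

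Write $c=p^2c'$ and $d=1+pd'$ for integers $c',d'$, as forced by $A\in\Gamma_0(p^2)\cap\Gamma_1(p)$. Since $d\ell\equiv\ell\pmod p$ and $1\le\ell\le\tfrac{1}{2}(p-1)<p$, the first claim $\overline{d\ell}=\ell$ is immediate. For the multiplier, substituting the factorizations of $c$ and $d$ into the exponent
$$
\frac{\ell(d-1)}{p}+\frac{cd\ell^2}{p^2}-\frac{c\ell}{p}
=\ell d' + c'd\ell^2 - pc'\ell
$$
shows the exponent is $\pi i n$ for some integer $n$. So the task reduces to proving that $n$ is even.

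Using $d=1+pd'$ and the fact that $p$ is odd, a short reduction modulo $2$ (in which $\ell^2\equiv\ell$ and $p\equiv 1$) collapses the expression to
$$
n\;\equiv\;\ell d'(1+c')\pmod 2 .
$$
The final step is to exploit the determinant condition. Substituting $a=1+pa'$, $d=1+pd'$, $c=p^2c'$ into $ad-bc=1$ yields $a'+d'+p(a'd'-bc')=0$, and reducing modulo $2$ gives $(1+a')(1+d')\equiv 1+bc'\pmod 2$. If $c'$ is odd, then $1+c'$ is even and $\ell d'(1+c')$ is even; if $c'$ is even, then $bc'\equiv 0\pmod 2$ forces $(1+a')(1+d')\equiv 1\pmod 2$, so both $a'$ and $d'$ are even, and once again $\ell d'(1+c')$ is even. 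In either case $n$ is even, so $\mu(A,\ell)=e^{\pi i n}=1$ and the corollary follows.

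The routine obstacle is only bookkeeping in the mod-$2$ reduction; the one genuinely substantive step is recognizing that the determinant condition is precisely what rules out the bad parity combination (namely $c'$ even with $d'$ odd), and this is why the statement requires the full intersection $\Gamma_0(p^2)\cap\Gamma_1(p)$ rather than either factor alone.
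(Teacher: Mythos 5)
Your proof is correct and follows essentially the same route as the paper: both apply Theorem \thm{F1trans1} directly, note that $d\equiv 1\pmod p$ forces $\overline{d\ell}=\ell$, and show $\mu(A,\ell)=1$ by checking that the exponent's numerator is divisible by $p^2$ (from $c\equiv 0\pmod{p^2}$, $d\equiv 1\pmod p$) and by $2$ (the paper's $\ell(c+1)(d+1)\equiv 0\pmod 2$ is exactly your $\ell d'(1+c')\equiv 0\pmod 2$, both resting on the determinant forbidding $c$ and $d$ from being simultaneously even). Only your closing remark is slightly off: the determinant/parity step uses nothing beyond $A\in\SL_2(\mathbb{Z})$, so it is the $p$-divisibility structure, not the parity, that requires the full intersection $\Gamma_0(p^2)\cap\Gamma_1(p)$.
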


\begin{proof}
Let $A = \AMat \in \Gamma_0(p^2) \cap \Gamma_1(p).$ Then $c\equiv0\pmod{p^2}$ and $a\equiv d\equiv1\pmod{p}$. So 
\\
\begin{enumerate}
\item[i)] $p\ell(d-1)+cd\ell^2-c\ell p\equiv 0 \pmod {p^2},$
\item[ii)] $p\ell(d-1)+cd\ell^2-c\ell p\equiv d\ell+\ell+cd\ell+c\ell \equiv \ell(c+1)(d+1) \equiv 0 \pmod {2}.$
\\\\
\end{enumerate}
Therefore, $$\mu(A,\ell) = \exp\left(\pi i \left(\frac{\ell(d-1)}{p}+\frac{c d \ell^2}{p^2}-\frac{c \ell}{p}\right)\right)=1$$
\\
and we have our result.
\end{proof}

\section{Proofs of main results}
\label{sec:modsymmres}

\subsection{Modularity and transformation results}\hfill\\

Let $\SFell{}{\ell}{p}{z}=\dfrac{\eta(p^2z)}{\eta(z)}\,\Fell{}{\ell}{p}{z}$. The generating function of the crank due to Andrews and Garvan is given by 
\begin{align*}
C(z,q) &= \sum_{n=0}^\infty \sum_m M(m,n)\,z^m\,q^n=\prod\limits_{n=1}^{\infty}\dfrac{1-q^n}{(1-zq^n)(1-z^{-1}q^n)}.
\end{align*} 
Replacing $z$ by $\zeta_p^{\ell}$ in the above expression and multiplying both sides by $\eta(p^2z)$, we can express the crank generating function in terms of our eta functions as 
\beq
\eta(p^2z)\, \Cpz{z}=(1-\zeta_p^{\ell})\,\SFell{}{\ell}{p}{z},~\text{where}~\Cpz{z}=q^{-\frac{1}{24}}C(\zeta_p^{\ell},q).
\label{eq:cranketaeq}
\eeq

This leads to our first main result stated in the introduction section. We restate it here.
\\
\begin{theorem}
\label{thm:modularthm}
Let $p > 3$ be prime. Then the function 
$$
\eta(p^2 z) \, \Cpz{z} 
$$
is a weakly holomorphic modular form of weight $1$ on the group $\Gamma_0(p^2) \cap \Gamma_1(p)$.
\end{theorem}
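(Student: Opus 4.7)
The plan is to exploit the identity \eqn{cranketaeq},
$$\eta(p^2 z) \, \Cpz{z} \;=\; (1 - \zeta_p^\ell) \, \SFell{1}{\ell}{p}{z} \;=\; (1 - \zeta_p^\ell)\, \frac{\eta(p^2 z)}{\eta(z)}\, \Fell{1}{\ell}{p}{z},$$
which reduces the theorem to showing that the right-hand side transforms as a weight-$1$ modular form on $\Gamma_0(p^2)\cap \Gamma_1(p)$ with trivial multiplier. Since the scalar $(1-\zeta_p^\ell)$ plays no role in the transformation, the task splits into (i) the weight-$1$ transformation of $\Fell{1}{\ell}{p}{z}$, which is already handled by Corollary \corol{F1trans2}, and (ii) the weight-$0$ transformation of the eta quotient $\eta(p^2 z)/\eta(z)$.

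For step (ii), fix $A = \AMat \in \Gamma_0(p^2) \cap \Gamma_1(p)$. Since $p^2\mid c$, the matrix $A' := \AppMat$ lies in $\SL_2(\mathbb{Z})$, and the identity $p^2(Az) = A'(p^2 z)$ combined with Theorem \thm{Knetamult} gives
$$\stroke{\eta(p^2 z)}{A}{1/2} = \nu_\eta(A')\,\eta(p^2 z),\qquad \stroke{\eta(z)}{A}{1/2} = \nu_\eta(A)\,\eta(z).$$
Hence I must verify $\nu_\eta(A') = \nu_\eta(A)$. A case split on the parity of $c$ (equivalently of $c/p^2$, since $p$ is odd) and of $d$ reduces this to two subtasks. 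The Jacobi-symbol contribution is easy: $\jacsa{d}{c}/\jacsa{d}{c/p^2} = \jacsa{d}{p^2} = 1$ (as $p\nmid d$), with an analogous identity handling $\jacsb{c}{d}$. The exponential contribution reduces to a congruence modulo $24$: writing $c = p^2 c'$, the difference of the two exponents turns out to be a multiple of $p^2-1$, so the classical fact that $24\mid p^2-1$ for primes $p>3$ closes the argument.

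Having established that $\eta(p^2z)\,\Cpz{z}$ transforms correctly under the slash action, the remaining step is to confirm weak holomorphy at every cusp of $\Gamma_0(p^2)\cap\Gamma_1(p)$. The cusp expansions of $\eta(z)$, $\eta(p^2 z)$, and $\Fell{1}{\ell}{p}{z}=\eta(z)^2/E_{0,\ell}(z)$ are, by Theorems \thm{Knetamult} and \thm{genetaYang}, fractional-power Laurent series in $q$ with finite principal parts, so their product has at worst polar behavior, which is precisely weak holomorphy.

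The main obstacle is the multiplier-matching computation $\nu_\eta(A') = \nu_\eta(A)$. Although mechanical in principle, it requires juggling two parity cases, a Jacobi-symbol factor, and the exponential argument in Knopp's explicit formula; the crucial ingredients that make everything collapse are $24\mid p^2-1$ together with the congruences $a\equiv d\equiv 1\pmod p$ coming from membership in $\Gamma_1(p)$. Once this verification is carried out, the theorem follows immediately.
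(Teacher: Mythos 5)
Your proposal is correct and follows essentially the same route as the paper: both reduce the theorem via \eqn{cranketaeq} to the modularity of $\SFell{1}{\ell}{p}{z}$, invoke Corollary \corol{F1trans2} for the $\Fell{1}{\ell}{p}{z}$ factor, and then handle the cusp conditions by inspecting the $q$-expansions. The only difference is that where the paper simply cites the well-known fact that $\eta(p^2z)/\eta(z)$ is a modular function on $\Gamma_0(p^2)$ for $p>3$, you re-derive it from Knopp's multiplier formula via $24\mid p^2-1$; that computation is sound but is just an unpacking of the cited standard result.
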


\begin{proof}
In the light of Equation \eqn{cranketaeq}, the proof of the theorem is equivalent to showing that $\SFell{}{\ell}{p}{z}$ is a weakly holomorphic modular form of weight $1$ on the group $\Gamma_0(p^2) \cap \Gamma_1(p)$. The transformation condition holds using Corollary \corol{F1trans2} and the well-known result that $\dfrac{\eta(p^2z)}{\eta(z)}$ is a modular function on $\Gamma_0(p^2)$ when $p>3$ is prime. That is 
\begin{equation*}
\stroke{\SFell{}{\ell}{p}{z}}{A}{1} = \SFell{}{\ell}{p}{z}
\end{equation*}
for $A \in \Gamma_0(p^2) \cap \Gamma_1(p)$.
\\\\
We next check the cusp conditions and show that $\stroke{\SFell{}{\ell}{p}{z}}{A}{1}(z)$ when expanded as a series in $q_{p^2}=\exp(\frac{2\pi i z}{p^2})$ has only finitely many terms with negative exponents for $A \in \SL_2(\mathbb{Z})$. By Theorems \thm{genetaYang} and {\cite[Theorem 2, p.51]{Kn-book}}, we have
$$\stroke{\SFell{}{\ell}{p}{z}}{A}{1}=\stroke{\dfrac{\eta(p^2z)}{\eta(z)}\dfrac{\eta(z)^2}{E_{0,\ell}(z)}}{A}{1}=\stroke{\dfrac{\eta(p^2z)}{\eta(z)}}{A}{1}.\varepsilon_A\dfrac{\eta(z)^2}{E_{\ell c,\ell d}(z)},$$
where $\varepsilon_A$ is a root of unity. Using the fact that $\dfrac{\eta(p^2z)}{\eta(z)}$ is a modular function on $\Gamma_0(p^2)$ and the definition of $\eta(z)$ and $E_{g,h}(z)$, we find that the series has only finitely many terms with negative exponents. \\
\end{proof}

Next, we define the (weight $k$) Atkin $U_p$ operator by
\beq
\stroke{F}{U_p}{k} := \frac{1}{{p}} \sum_{r=0}^{p-1} F\Lpar{\frac{z+r}{p}} 
= p^{\frac{k}{2}-1}\sum_{n=0}^{p-1} \stroke{F}{T_r}{k},
\label{eq:Updef}
\eeq
where
$$
T_r = \begin{pmatrix} 1 & r \\ 0 & p \end{pmatrix},
$$
and the more general $U_{p,m}$ is defined as
\beq
\stroke{F}{U_{p,m}}{k} := \frac{1}{p} \sum_{r=0}^{p-1} 
\exp\Lpar{-\frac{2\pi irm}{p}}\,F\Lpar{\frac{z+r}{p}} 
= p^{\frac{k}{2}-1} 
\sum_{r=0}^{p-1} \exp\Lpar{-\frac{2\pi irm}{p}}\, \stroke{F}{T_r}{k}. 
\label{Upkdef}
\eeq
We note that $U_p = U_{p,0}$. In addition, if
$$
F(z) = \sum_n a(n) q^n = \sum_n a(n)\,\exp(2\pi izn),
$$
then
$$
\stroke{F}{U_{p,m}}{k} = q^{m/p} \sum_n a(pn+m)\,q^n = \exp(2\pi imz/p)\, 
\sum_n a(pn+m)\,\exp(2\pi inz).
$$

Combining Equation \eqn{cranketaeq} and Definition \refdef{KCpm}, we have
\begin{prop}
\label{propo:KCpm2}
For $p>3$ be a prime and $0 \le m \le p-1$ we have
\beq
\mathcal{S}_{p,m}(\zeta_p^{\ell},z)=(1-\zeta_p^{\ell})\,\stroke{\SFell{}{\ell}{p}{z}}{U_{p,m}}{1}
\label{eq:KCpmdef}
\eeq
\end{prop}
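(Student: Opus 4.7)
The plan is to apply the $U_{p,m}$ operator to both sides of the identity \eqn{cranketaeq}. After dividing by $(1-\zeta_p^\ell)$, the claim reduces to the purely $q$-expansion identity
$$
\stroke{\bigl[\eta(p^2 z)\,\Cpz{z}\bigr]}{U_{p,m}}{1} = \mathcal{K}_{p,m}^{(C)}(\zeta_p^\ell, z).
$$
Using $\eta(p^2 z)=q^{p^2/24}(q^{p^2};q^{p^2})_\infty$ and $\Cpz{z}=q^{-1/24}C(\zeta_p^\ell,q)$, the left-hand factor rewrites as
$$
\eta(p^2 z)\,\Cpz{z} = q^{s_p}\,(q^{p^2};q^{p^2})_\infty\,C(\zeta_p^\ell,q),
$$
with $s_p=(p^2-1)/24 \in \mathbb{Z}_{\ge 0}$ since $p^2\equiv 1\pmod{24}$ for every prime $p>3$. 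In particular this is a genuine $q$-series in integer powers of $q$, so the action of $U_{p,m}$ can be read off as coefficient extraction.

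The key technical tool is the intertwining identity
$$
\stroke{\bigl[h(pz)\,f(z)\bigr]}{U_{p,m}}{k} = h(z)\,\stroke{f}{U_{p,m}}{k},
$$
valid whenever $h(z)=\sum b_n q^n$ and $f(z)=\sum a_n q^n$ have integer $q$-expansions. This follows by direct inspection of \eqn{Updef}: the coefficient of $q^{pk+m}$ in $h(pz)f(z)$ equals $\sum_n b_n\,a_{p(k-n)+m}$, and the reindexing $j=k-n$ factors the extracted sum precisely as $h(z)\cdot(f|U_{p,m})$. I would apply this with $h(z)=(q^p;q^p)_\infty$ and $f(z)=q^{s_p}C(\zeta_p^\ell,q)$: this collapses the $(q^{p^2};q^{p^2})_\infty$ originally sitting inside $\eta(p^2z)$ to the $(q^p;q^p)_\infty$ demanded by Definition \refdef{KCpm}, and reduces the problem to computing $[q^{s_p}C(\zeta_p^\ell,q)]|U_{p,m}$.

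Finally I would write $C(\zeta_p^\ell,q)=\sum_{N\ge 0}a(N)\,q^N$ with $a(N)=\sum_{k=0}^{p-1}M(k,p,N)\zeta_p^{k\ell}$. Then $q^{s_p}C(\zeta_p^\ell,q)$ contributes a $q^{N+s_p}$ summand, and $U_{p,m}$ retains exactly those indices with $N+s_p\equiv m\pmod p$, i.e.\ $N=pj+m-s_p$ with $j\ge \lceil(s_p-m)/p\rceil$ to enforce $N\ge 0$. Assembling everything yields
$$
q^{m/p}(q^p;q^p)_\infty\sum_{j\ge\lceil(s_p-m)/p\rceil}\Bigl(\sum_{k=0}^{p-1}M(k,p,pj+m-s_p)\,\zeta_p^{k\ell}\Bigr)q^j,
$$
which matches Definition \refdef{KCpm} verbatim. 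No serious obstacle arises in this plan; the only real work is bookkeeping around the $s_p$-shift and verifying the intertwining identity at the level of $q$-expansions, both of which are routine once the product is written in the form $h(pz)f(z)$ with $h$ having integer exponents.
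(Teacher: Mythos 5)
Your proposal is correct and follows exactly the route the paper intends: the paper offers no written proof, simply asserting that the proposition follows by "combining" equation \eqn{cranketaeq} with Definition \refdef{KCpm}, and your argument is precisely that combination with the routine coefficient-extraction bookkeeping (the $s_p$-shift and the intertwining identity $[h(pz)f(z)]\mid U_{p,m}=h(z)\,(f\mid U_{p,m})$) written out explicitly. No discrepancy with the paper's approach.
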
 

The modularity result of the crank function stated in the introduction section follows easily now. We restate the theorem here and note that the proof utilizes the same technique as it did for the rank dissection elements $\mathcal{K}_{p,m}^{}(z)$ {\cite[Theorem 6.3, p.234]{Ga19}}. The functions involved in the transformation are however different.
\begin{theorem}
\label{thm:Kpmthm}
Let $p>3$ be prime and suppose $0 \le m \le p-1$. Then 
\begin{enumerate}
\item[(i)]
$\mathcal{S}_{p,0}^{}(z)$ is a weakly holomorphic modular form of weight $1$ on
$\Gamma_1(p)$.
\item[(ii)]
If $1 \le m \le (p-1)$ then $\mathcal{S}_{p,m}(z)$ is a weakly holomorphic modular form of weight $1$ on
$\Gamma(p)$.                           
In particular,
$$
\stroke{\mathcal{S}_{p,m}(z)}{A}{1}
= \exp\Lpar{\frac{2\pi ibm}{p}}\,\mathcal{S}_{p,m}(z),
$$
for $A=\AMat\in\Gamma_1(p)$.                       
\end{enumerate}
\end{theorem}

\begin{proof}
We let
$$
A = \AMat \in \Gamma_1(p),
$$
and undergo the same matrix transformations as in the proof of {\cite[Theorem 6.3, p.234]{Ga19}}, with our function $\SFell{}{1}{p}{z}$ replacing $\JSpz{z}{}$, and apply Corollary \corol{F1trans2} (since $B_k\in \Gamma_0(p^2) \cap \Gamma_1(p)$) to arrive at 
\begin{align*}
\stroke{\mathcal{S}_{p,m}(z)}{A}{1}
&= \frac{1}{\sqrt{p}}(1-\zeta_p^{})\, \exp\Lpar{\frac{2\pi ibm}{p}}\,\sum_{k'=0}^{p-1} \exp\Lpar{-\frac{2\pi ik'm}{p}}\,\stroke{\SFell{}{1}{p}{z}}{T_{k'}}{1} 
\\
&=\exp\Lpar{\frac{2\pi ibm}{p}}\,\mathcal{S}_{p,m}(z).
\end{align*} 
It is easy to see that each $\mathcal{S}_{p,m}(z)$ is holomorphic on $\mathcal{H}$. The cusp conditions follow by a standard argument. We examine orders at each cusp in more detail in a later section.
\end{proof}

\subsection{Symmetry result analogous to the rank}

\begin{theorem}
\label{thm:symmthm1}
Suppose $p>3$ prime, $0 \le m \le p-1$, and $1 \le d \le p-1$.             
Then\\
\beq
\stroke{\mathcal{S}_{p,m}(\zeta_p, z)}{A}{1}
=\frac{\sin(\pi/p)}{\sin(d\pi/p)} (-1)^{d+1}\,\zeta_p^{mak}\,\mathcal{S}_{p,{ma^2}}^{}(\zeta_p^d,z),
\label{eq:KpmAtrans}
\eeq
assuming $1 \le a,d \le (p-1)$ 
and
$$
A = \begin{pmatrix}
      a & k \\ p & d 
    \end{pmatrix}
\in \Gamma_0(p).
$$
\end{theorem}

\begin{proof}
Once again, we undergo the same matrix transformations as we did in the case of ${\mathcal{K}_{p,m}^{}(\zeta_p,z)}$ {\cite[Proposition 4.7]{Ga-Ri}}, with our function $\Fell{}{1}{p}{z}$ replacing $\Fell{1}{1}{p}{z}$, and apply Proposition \propo{F1trans1}. Since $B_r\in \Gamma_0(p) $, we have 
\begin{align*}
\mu(B_r,1) &= \exp\left(\pi i \left(\frac{(d-r'p-1)}{p}+\frac{p^2(d-r'p)}{p^2}-p\right)\right)
\\
&=(-1)^{d+1}\,\exp\left(\pi i \left(\frac{d-1}{p}\right)\right)(-1)^{-r'(p+1)},
\\
&=(-1)^{d+1}\,\zeta_p^{\frac{d-1}{2}}.
\end{align*}
Therefore
\begin{align*}
\mathcal{S}_{p,m}(\zeta_p, z) \strokeb{A}{1} & =(1-\zeta_p^{})\, \SFell{}{1}{p}{z} 
  \strokeb{U_{p,m}}{1} \strokeb{A}{1}  
\\
& = \frac{1-\zeta_p}{\sqrt{p}} \sum_{r=0}^{p-1}
  \zeta_p^{-rm} 
  \frac{\eta(p^2 z)}{\eta(z)} \, 
   \mu(B_r,1) 
   \Fell{}{d}{p}{z}
   \strokeb{T_{r'}}{1} 
\\
& = \frac{\sin(\pi/p)}{\sin(d\pi/p)} (-1)^{d} \zeta_p^{mak} \frac{(1-\zeta_p^d)}{\sqrt{p}} \sum_{r'=0}^{p-1}
  \zeta_p^{-r'ma^2} 
  \frac{\eta(p^2 z)}{\eta(z)} \, 
   \Fell{}{d}{p}{z}
   \strokeb{T_{r'}}{1},   
\end{align*}
using the facts that
\begin{align*}
\frac{(1-\zeta_p)\,\zeta_p^{\frac{d-1}{2}}}{(1-\zeta_p^d)}= \frac{\sin(\pi/p)}{\sin(d\pi/p)},
\\
\zeta_p^{-rm} = \zeta_p^{m(-r'a^2 +ak)}= \zeta_p^{mak} \zeta_p^{-mr'a^2},
\end{align*}
and as $r$ runs through a complete residue system mod $p$ so does $r'$.
The result follows.
\end{proof}

We end this section with an illustration of the validity and advantage of our theorem. The following are the elements of the $11$-dissection of  $\eta(p^2z) \, \Cpz{\zeta_p, z}$ for $p=11$ in terms of generalized eta-functions. A similar form of this identity can also be found in the works of Garvan \cite{Ga90} and Hirschhorn \cite{Hi90}.

\begin{align*}
\mathcal{S}_{11,0}(\zeta_{11}, z)&=0,
\\
\mathcal{S}_{11,{1}}(\zeta_{11}, z)&=- (1+\zeta_{11}^2+\zeta_{11}^2 + \zeta_{11}^5+\zeta_{11}^6+\zeta_{11}^8+\zeta_{11}^9) \, \frac{\eta(11z)^2}{\eta_{11,{3}}(z)},
\\
\mathcal{S}_{11,{2}}(\zeta_{11}, z)&=- (1+\zeta_{11}^2+ \zeta_{11}^5+\zeta_{11}^6+\zeta_{11}^9) \frac{\eta(11z)^2 \, \eta_{11,{1}}(z)}{\eta_{11,{4}}(z)\, \eta_{11,{5}}(z)},
\\
\mathcal{S}_{11,{3}}(\zeta_{11}, z)&=- (1+\zeta_{11}^4+\zeta_{11}^7) \, \frac{\eta(11z)^2}{\eta_{11,{4}}(z)},
\\
\mathcal{S}_{11,{4}}(\zeta_{11}, z)&=-(\zeta_{11}^3 + \zeta_{11}^8) \, \frac{\eta(11z)^2}{\eta_{11,{5}}(z)},
\\
\mathcal{S}_{11,{5}}(\zeta_{11}, z)&=\frac{\eta(11z)^2}{\eta_{11,{1}}(z)},
\\
\mathcal{S}_{11,{6}}(\zeta_{11}, z)&=- (2+\zeta_{11}^2+\zeta_{11}^3 +\zeta_{11}^4+ \zeta_{11}^5+\zeta_{11}^6+\zeta_{11}^7+\zeta_{11}^8+\zeta_{11}^9) \, \frac{\eta(11z)^2 \, \eta_{11,{5}}(z)}{\eta_{11,{2}}(z)\, \eta_{11,{3}}(z)},
\\
\mathcal{S}_{11,{7}}(\zeta_{11}, z)&=(\zeta_{11}^2 + \zeta_{11}^9) \,  \frac{\eta(11z)^2 \, \eta_{11,{3}}(z)}{\eta_{11,{1}}(z)\, \eta_{11,{4}}(z)},
\\
\mathcal{S}_{11,{8}}(\zeta_{11}, z)&=(1+\zeta_{11}^3 + \zeta_{11}^8) \,  \frac{\eta(11z)^2 \, \eta_{11,{2}}(z)}{\eta_{11,{1}}(z)\, \eta_{11,{3}}(z)},
\\
\mathcal{S}_{11,{9}}(\zeta_{11}, z)&=(1+\zeta_{11}^2+\zeta_{11}^4 + \zeta_{11}^7+\zeta_{11}^9) \, \frac{\eta(11z)^2}{\eta_{11,{2}}(z)},
\\
\mathcal{S}_{11,{10}}(\zeta_{11}, z)&=- (\zeta_{11}^2 + \zeta_{11}^4+\zeta_{11}^7+\zeta_{11}^9) \,  \frac{\eta(11z)^2 \, \eta_{11,{4}}(z)}{\eta_{11,{2}}(z)\, \eta_{11,{5}}(z)}.
\end{align*}

We consider the $m=7$ element of the dissection 

$$\mathcal{S}_{11,{7}}(\zeta_{11}, z)= (\zeta_{11}^2+\zeta_{11}^9) \, \frac{\eta(11z)^2\eta_{11,{3}}(z)}{\eta_{11,{1}}(z)\eta_{11,{4}}(z)},$$ and find ${\mathcal{S}_{11,7}(\zeta_{11}, z)}|[A]_1$ for $
A = \begin{pmatrix}
      3 & 1 \\ 11 & 4 
    \end{pmatrix} \in \Gamma_0(11).
$

Using the transformations for $\eta(z)$ and the theta function $f_{N,\rho}(z)$ (\cite[Theorem 6.12 \& 6.14, p.243]{Ga19}) and the fact that $f_{N,\rho}(z)=f_{N,N+\rho}(z)=f_{N,-\rho}(z)$, we get
\begin{align*}
{\mathcal{S}_{11,7}(\zeta_{11}, z)}|[A]_1
&={(\zeta_{11}^2 + \zeta_{11}^9) \, \frac{\eta(11z)^3 \, f_{11,3}(z)}{f_{11,1}(z)\, f_{11,4}(z)}}|[A]_1\\
&=(\zeta_{11}^2 + \zeta_{11}^9) \, \frac{\eta(11z)^3 \, f_{11,9}(z)}{f_{11,3}(z)\, f_{11,12}(z)}\frac{e^{\frac{27 \pi i}{11}}}{e^{\frac{3 \pi i}{11}}.e^{\frac{48 \pi i}{11}}}\\
&=-\frac{\sin(\frac{\pi}{11})}{\sin(\frac{4\pi}{11})}\,
\exp\left({\frac{9\pi i}{11}}\right)\,(1+\zeta_{11}^3 + \zeta_{11}^8) \, \frac{\eta(11z)^2 \, \eta_{11,2}(z)}{\eta_{11,3}(z)\, \eta_{11,1}(z)}\\
&=-\frac{\sin(\frac{\pi}{11})}{\sin(\frac{4\pi}{11})}\,\exp\left({\frac{9\pi i}{11}}\right)\,\mathcal{S}_{11,8}(\zeta_{11}^4,z),
\end{align*}
which agrees with our symmetry result  (\thm{symmthm1}). We also make note of the fact that $m=7$ and $8$ are both quadratic non-residues modulo $11$. We can thus conclude that in the light of our symmetry theorem, the identities for $\mathcal{K}_{p,m}(\zeta_{p},z)$ in terms of generalized eta functions can be fully determined if we know one identity each for the cases when $m=0$, $m$ is a quadratic residue mod $p$ and $m$ is a quadratic non-residue mod $p$.

\section{Lower bounds for orders of $\mathcal{S}_{p,m}(\zeta_p,z)$ at cusps}
\label{sec:cuspords}

In this section, we calculate lower bounds for the orders of $\mathcal{S}_{p,m}(\zeta_p,z)$ at the cusps of $\Gamma_1(p)$, which we use in proving the $\mathcal{S}_{p,m}(\zeta_p,z)$ identities in the subsequent section. 
\\\\
We first establish a few other useful transformation results.
\\\\
Let $p>3$ be prime, $1\le\ell\le(p-1)$, and define $\Gell{}{\ell}{p}{z} := \dfrac{\eta(z)^2}{E_{\ell,0}(z)}$, where $E_{\ell,0}(z)=q^{B(\ell/p)/2}\prod\limits_{m=1}^{\infty}\left(1-q^{m-1+\frac{\ell}{p}}\right)\left(1-q^{m-\frac{\ell}{p}}\right)$.
\\
We list the transformations of $E_{0,\ell}(z)$ under matrices $S=\SMat$ and $T=\TMat$, which are the generators of $\SL_2(\mathbb{Z})$, and that of $\Gell{}{\ell}{p}{z}$ under $S$.

\begin{theorem} 
\label{thm:F2trans}
\begin{enumerate}
\item $\stroke{E_{0,\ell}(z)}{S}{1}=-i \zeta_{2p}^{\ell}E_{\ell,0}(z)$.
\item $\stroke{\Gell{}{\ell}{p}{z}}{S}{1}=\zeta_{2p}^{-\ell}\Fell{}{-\ell}{p}{z}$.
\item $\stroke{E_{0,\ell}(z)}{T}{1}=e^{\pi i/6}E_{0,\ell}(z)$.
\end{enumerate}
\end{theorem}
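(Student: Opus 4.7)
The plan is to derive all three identities by direct application of Yang's transformation formula in Theorem \thm{genetaYang} for the generalized Dedekind eta product $E_{g,h}(z)$, supplemented by the classical $S$-transformation of $\eta(z)^2$ from Theorem \thm{Knetamult}.

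For parts (1) and (3), I simply specialize Yang's formula. For (1), I take part (iv) with $A=S=\begin{pmatrix}0&-1\\1&0\end{pmatrix}$ and $(g,h)=(0,\ell)$. Since $c=1$ is odd, the prefactor is $\varepsilon(0,-1,1,0)=\exp(\pi i[bd(1-c^2)+c(a+d-3)]/6)=e^{-\pi i/2}=-i$. The new parameters are $(g',h')=(0,\ell)S=(\ell,0)$, and since $g=0$ and $d=0$ most terms in $\delta$ vanish, leaving $\delta=-(gb+h(d-1))/p=\ell/p$. Together these give $E_{0,\ell}(Sz)=-i\,e^{\pi i\ell/p}E_{\ell,0}(z)=-i\zeta_{2p}^{\ell}E_{\ell,0}(z)$. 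For (3), I take part (iii) with $(g,h)=(0,\ell)$ and $b=1$: since $B(0)=1/6$, one obtains $E_{0,\ell}(z+1)=\exp(\pi i\,B(0))E_{0,\ell}(z)=e^{\pi i/6}E_{0,\ell}(z)$.

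For part (2), I combine the analogue of (1) for $E_{\ell,0}$ with the $S$-transformation of $\eta(z)^2$. Applying Yang's formula (iv) with $(g,h)=(\ell,0)$ under $S$ produces $(g',h')=(0,-\ell)$ and, since $h=0$, only the contribution $-gb/p$ survives in $\delta$, again yielding $\delta=\ell/p$ and the same $\varepsilon=-i$; hence $E_{\ell,0}(Sz)=-i\zeta_{2p}^{\ell}E_{0,-\ell}(z)$. On the other side, Theorem \thm{Knetamult} applied to $S$ gives $\eta(-1/z)^2=(-iz)\eta(z)^2$. Substituting both into the weight-$1$ slash of $\Fell{2}{\ell}{p}{z}$ under $S$, namely $z^{-1}\eta(-1/z)^2/E_{\ell,0}(-1/z)$, the factor $-iz$ cancels against $-i$ in the denominator and leaves $\zeta_{2p}^{-\ell}\eta(z)^2/E_{0,-\ell}(z)=\zeta_{2p}^{-\ell}\Fell{1}{-\ell}{p}{z}$, as desired.

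The proof is essentially a verification exercise, so the main obstacle is pure bookkeeping: tracking the multiplier $\varepsilon$, correctly selecting the parity branch (odd $c$ versus odd $d$) in Yang's formula, and getting the signs in $\delta$ and the various roots of unity to line up consistently.
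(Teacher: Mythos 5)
Your verification is correct and follows essentially the same route as the paper, which likewise reduces everything to Yang's transformation formulas (Theorem \ref{thm:genetaYang}) and the $S$-transformation of $\eta$ from Theorem \ref{thm:Knetamult}. The only cosmetic difference is that for part (2) you apply Yang's formula (iv) afresh to $E_{\ell,0}$ rather than inverting part (1) as the paper does; both yield the same answer once one uses $E_{0,-\ell}=-\zeta_p^{-\ell}E_{0,\ell}$.
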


\begin{proof}
Proofs of (1) and (3) follow easily using the definition of $E_{g,h}(z)$. (2) follows from (1) and the transformation of $\eta(z)$ under $S$ i.e. $\stroke{\eta(z)}{S}{1}=\exp(\frac{-\pi i}{4})\,\eta(z)$ ({\cite[Theorem 2, p.51]{Kn-book}}). 
\end{proof}
Let
$
\mathfrak{V}_p:= \left\{\Fell{}{\ell}{p}{z},\,
             \Gell{}{\ell}{p}{z}\,:\, 0< \ell < p\right\}.
$
\\\\
The action of $\SL_2(\mathbb{Z})$ on each element can be given explicitly using Theorems {\cite[Theorem 2, p.51]{Kn-book}} and \thm{F2trans}.  Also each $\mathcal{F}\in\mathfrak{V}_p$ has a $q$-expansion
$$\mathcal{F}(z) = \sum_{m\ge m_0} a(m) \exp\Lpar{2\pi i z \frac{m}{24p^2}},$$ 
where $a(m_0)\ne 0$. We define $$\ord(\mathcal{G};\infty) := \frac{m_0}{24p^2}.$$
\\
For any cusp $\frac{a}{c}$ with $(a,c)=1$ we define 
$$\ord\Lpar{\mathcal{F};\frac{a}{c}} := \ord(\stroke{\mathcal{F}}{A}{1};\infty),$$
where $A\in\SL_2(\mathbb{Z})$ and $A\infty = \frac{a}{c}$.  We note that $\stroke{\mathcal{F}}{A}{1}\in \mathfrak{V}_p$. We also note that this definition coincides with the definition of invariant order at a cusp \cite[p.2319]{Br-Sw}, \cite[p.275]{Bi89}.
The order of each function $\mathcal{F}(z)\in \mathfrak{V}_p$ is defined in the natural way i.e. $$\ord\Lpar{\mathcal{F};\frac{a}{c}} := 2\,\ord\Lpar{\eta(z);\frac{a}{c}}\,+\, \ord\Lpar{1/E_{*,*};\frac{a}{c}},$$ where $\ord\Lpar{\eta(z);\frac{a}{c}}$ is the usual invariant order of $\eta(z)$ at the cusp $\frac{a}{c}$ \cite[p.34]{Ko-book}.
\\\\
We determine $\ord\Lpar{\mathcal{F};\infty}$ for each $\mathcal{F}(z) \in \mathfrak{V}_p$.  After some calculation we find
\begin{prop}
\label{propo:Fords}
Let $p>3$ be prime. Then
$$\ord\Lpar{\Fell{}{\ell}{p}{z};\infty} = 0,$$
$$\ord\Lpar{\Gell{}{\ell}{p}{z};\infty} = \frac{\ell}{2p}-\frac{\ell^2}{2p^2}.$$
\end{prop}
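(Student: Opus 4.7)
\smallskip

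\noindent\textbf{Proof proposal.}
The plan is to extract the leading $q$-power of each function directly from the product definitions of $\eta(z)$ and $E_{g,h}(z)$, since by the definition of $\ord(\mathcal{F};\infty)$ the quantity we want is simply the exponent of $q=\exp(2\pi iz)$ in the first non-vanishing term of the $q$-expansion. Recall $\eta(z)^2=q^{1/12}\prod_{n\ge1}(1-q^n)^2$, so the leading contribution from $\eta(z)^2$ is $q^{1/12}$ in both cases.

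For $\Fell{1}{a}{p}{z}=\eta(z)^2/E_{0,a}(z)$, I would note that $B(0)/2=1/12$, and
\[
E_{0,a}(z)=q^{1/12}\,(1-\zeta_p^{a})\prod_{m\ge 2}(1-\zeta_p^{a}q^{m-1})\prod_{m\ge 1}(1-\zeta_p^{-a}q^{m}).
\]
Since $1\le a\le p-1$, the constant factor $(1-\zeta_p^{a})$ is nonzero, and every remaining factor contributes $1+O(q)$. Hence $E_{0,a}(z)$ has leading $q$-power $q^{1/12}$, so the ratio has leading $q$-power $q^{0}$, giving $\ord\Lpar{\Fell{1}{a}{p}{z};\infty}=0$.

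For $\Fell{2}{a}{p}{z}=\eta(z)^2/E_{a,0}(z)$, the prefactor in $E_{a,0}(z)$ is $q^{B(a/p)/2}$ with
\[
\tfrac{1}{2}B(a/p)=\tfrac{1}{2}\Lpar{\tfrac{a^{2}}{p^{2}}-\tfrac{a}{p}+\tfrac{1}{6}}=\tfrac{a^{2}}{2p^{2}}-\tfrac{a}{2p}+\tfrac{1}{12}.
\]
Because $0<a<p$, every factor $(1-q^{m-1+a/p})$ and $(1-q^{m-a/p})$ in the product has a strictly positive exponent on $q$, so each factor tends to $1$ as $q\to 0$ and contributes no further shift in the leading exponent. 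Hence the leading $q$-power of $E_{a,0}(z)$ is $q^{1/12-a/(2p)+a^{2}/(2p^{2})}$, and subtracting this from the $1/12$ coming from $\eta(z)^{2}$ yields
\[
\ord\Lpar{\Fell{2}{a}{p}{z};\infty}=\tfrac{1}{12}-\Lpar{\tfrac{1}{12}-\tfrac{a}{2p}+\tfrac{a^{2}}{2p^{2}}}=\tfrac{a}{2p}-\tfrac{a^{2}}{2p^{2}},
\]
as required.

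The argument is essentially a direct bookkeeping of the exponents arising from $\eta$ and from $E_{g,h}$, so there is no serious obstacle; the only subtle point to keep in mind is to verify, for both cases, that the accompanying constant (or power-series) factor one extracts from the product is nonzero so that the stated leading $q$-exponent really is the order at $\infty$. This is guaranteed here by the hypothesis $1\le a\le p-1$, which both prevents $\zeta_p^{a}=1$ in the $\Fell{1}{a}{p}{z}$ calculation and keeps every exponent in the $\Fell{2}{a}{p}{z}$ product strictly positive.
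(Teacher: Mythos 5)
Your computation is correct and is exactly the calculation the paper suppresses behind the phrase ``after some calculation'': one reads off the leading $q$-exponent of $\eta(z)^2$ and of $E_{0,a}(z)$ (respectively $E_{a,0}(z)$) from the product definitions, checking that the surviving constant factor $(1-\zeta_p^a)$ is nonzero and that all remaining factors are $1+O(q^{>0})$. Nothing to add.
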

We also need \cite[Corollary 2.2]{Ko-book}.
\begin{prop}
\label{propo:etaord}
Let $N\ge 1$ and let
$$
F(z) = \prod_{m\mid N} \eta(m z)^{r_m},
$$
where each $r_m\in\mathbb{Z}$. Then for $(a,c)=1$,
$$
\ord\Lpar{F(z);\frac{a}{c}} = \sum_{m\mid N} \frac{(m,c)^2 r_m}{24m}.
$$
\end{prop}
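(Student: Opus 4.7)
The plan is to proceed by the additivity of the order functional on products: it suffices to prove the single-factor identity
$$
\ord\!\Lpar{\eta(mz);\tfrac{a}{c}} = \frac{(m,c)^2}{24m}
$$
for every $m \mid N$, and then sum with multiplicities $r_m$ to obtain the full statement.

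For the single-factor case, I would fix $A = \AMat \in \SL_2(\mathbb{Z})$ with $A\infty = a/c$ and analyze $\stroke{\eta(mz)}{A}{\tfrac{1}{2}}$ near $\infty$. The value $\eta(m\cdot Az)$ is governed by the determinant-$m$ integer matrix $\widetilde{A} := \begin{pmatrix} ma & mb \\ c & d \end{pmatrix}$, and the key technical step is a Hermite-type factorization
$$
\widetilde{A} = A^{*} \, \begin{pmatrix} g & h \\ 0 & m/g \end{pmatrix}, \qquad A^{*} \in \SL_2(\mathbb{Z}),
$$
where $g := (m,c)$. Such a factorization exists because $\gcd(ma,c) = g$ (using $(a,c)=1$): writing $ma = g\alpha$ and $c = g\gamma$ with $(\alpha,\gamma) = 1$, one may take the first column of $A^{*}$ to be $\begin{pmatrix} \alpha \\ \gamma \end{pmatrix}$, complete the second column via Bezout's identity, and then choose $h$ so as to match the $(1,2)$-entry of $\widetilde{A}$.

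Applying Knopp's transformation formula (Theorem \thm{Knetamult}) to $A^{*}$ on the decomposition $\eta(\widetilde{A} z) = \eta(A^{*}\cdot Uz)$, a short computation using $c^{*}g = c$ and $c^{*}h + d^{*}m/g = d$ (which follow from reading off the last row of $A^{*}U = \widetilde{A}$) shows that the $(c^{*}\cdot Uz + d^{*})^{1/2} = (cz+d)^{1/2}/\sqrt{m/g}$ factor from Knopp's formula cancels against the $(cz+d)^{-1/2}$ from the slash action, leaving
$$
\stroke{\eta(mz)}{A}{\tfrac{1}{2}} = \frac{\nu_\eta(A^{*})}{\sqrt{m/g}} \; \eta\!\Lpar{\tfrac{g z + h}{m/g}}.
$$
Expanding the right-hand side using the product definition of $\eta$ yields leading behaviour $\exp\!\Lpar{2\pi i\, z\, g^{2}/(24m)}$ times a nonzero constant, so by the paper's convention for $\ord(\cdot;\infty)$ the invariant order at $a/c$ equals $g^{2}/(24m) = (m,c)^{2}/(24m)$, as claimed.

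The main obstacle is executing the Hermite factorization cleanly and tracking the three unit factors (the multiplier $\nu_\eta(A^{*})$, the normalization $(m/g)^{-1/2}$, and the mutual cancellation of the $(cz+d)^{\pm 1/2}$ factors) carefully enough to confirm that the prefactor is a nonzero constant and so does not shift the leading exponent. This bookkeeping is the content of the classical eta-quotient calculation in \cite[Corollary 2.2]{Ko-book}, so in practice one simply quotes that reference once the paper's normalization of $\ord$ has been checked to agree with Koblitz's, as the author does.
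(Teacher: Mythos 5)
Your argument is correct, but it is worth noting that the paper does not prove this proposition at all: it is quoted verbatim as \cite[Corollary 2.2]{Ko-book} (K\"ohler's book, not Koblitz), with the invariant order of $\eta$ at a cusp taken as known from \cite[p.~34]{Ko-book}. So you have supplied a self-contained derivation of the standard Ligozat-type order formula where the author simply cites it. Your route is the classical one and it checks out: additivity of $\ord$ over the factors reduces everything to $\ord\left(\eta(mz);\tfrac{a}{c}\right)=\tfrac{(m,c)^2}{24m}$; the Hermite factorization $\begin{pmatrix} ma & mb \\ c & d\end{pmatrix}=A^{*}\begin{pmatrix} g & h\\ 0 & m/g\end{pmatrix}$ with $g=\gcd(ma,c)=\gcd(m,c)$ (using $(a,c)=1$) is legitimate, and your identities $c^{*}g=c$ and $c^{*}h+d^{*}m/g=d$ from the bottom row do produce the cancellation $(c^{*}\,Uz+d^{*})^{1/2}=(cz+d)^{1/2}(m/g)^{-1/2}$, leaving a nonzero constant times $\eta\left(\tfrac{gz+h}{m/g}\right)$ whose leading exponent in $e^{2\pi i z}$ is $g^{2}/(24m)$, in agreement with the paper's normalization of $\ord(\,\cdot\,;\infty)$. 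Two small points you could make explicit: the answer must be (and is) independent of the choice of $A\in\SL_2(\mathbb{Z})$ with $A\infty=a/c$, since any two such choices differ by right multiplication by $\pm T^{n}$, which only alters the constant prefactor; and the half-integral-weight multiplier and square-root branch ambiguities are absorbed into that nonzero constant, so they never threaten the leading exponent. What your approach buys is independence from the external reference; what the paper's citation buys is brevity, since this formula is entirely standard.
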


From \cite[Corollary 4, p.930]{Ch-Ko-Pa} and \cite[Lemma 3, p.929]{Ch-Ko-Pa}
we have
\begin{prop}
\label{propo:cusps1}
Let $p>3$ be prime. Then we have the following set of inequivalent cusps for $\Gamma_1(p)$ and their corresponding fan widths.
$$
\begin{matrix}
\mbox{Cusp:}\quad  & i\infty,\, & 0,\,  & \frac{1}{2},\, & \frac{1}{3},\, & \dots,\, & \frac{1}{\tfrac{1}{2}(p-1)},\, & \frac{2}{p},\, & \frac{3}{p},\, & \dots,\, & \frac{\tfrac{1}{2}(p-1)}{p},\\
\mbox{Fan width:}\quad & 1,\, &  p,\, & p,\, & p,\, & \dots,\, & p,\, & 1,\, & 1,\, & \dots,\, & 1.     
\end{matrix}
$$
\end{prop}

We next calculate lower bounds of the invariant order of $\mathcal{S}_{p,m}(\zeta_p,z)$ at each cusp of $\Gamma_1(p)$.

\begin{theorem}
\label{thm:CKpmords}
Let $p>3 $ be prime, and suppose $0 \le m \le p-1$. Then
\begin{enumerate}
\item[(i)]
$$
\ord\Lpar{\mathcal{S}_{p,m}(\zeta_p,z);0}
\quad
\begin{cases}
\ge 0 & \mbox{if $p=11$},\\
=-\frac{1}{24p}(p-1)(p-11) & \mbox{otherwise};
\end{cases}
$$
\item[(ii)]
$$
\ord\Lpar{\mathcal{S}_{p,m}(\zeta_p,z);\frac{1}{n}}
\quad
\begin{cases}
= - \dfrac{1}{2p}\left[\dfrac{(p-1)(p+1)}{12}+n (n-p)\right] &
\mbox{if $2\le n <\tfrac{p}{2}-\tfrac{1}{2}\sqrt{\frac{2p^2+1}{52}}$},\\
\ge 0 & \mbox{otherwise};
\end{cases}
$$
and
\item[(iii)]
$$
\ord\Lpar{\mathcal{S}_{p,m}(\zeta_p,z);\frac{n}{p}}
\ge \Lpar{\frac{p^2-1}{24p}}.
$$
\end{enumerate}
\end{theorem}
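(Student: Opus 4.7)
The strategy is based on the $U_{p,m}$ representation of Proposition \propo{KCpm2},
\begin{equation*}
\mathcal{K}_{p,m}^{(C)}(\zeta_p,z) = \frac{1-\zeta_p}{\sqrt{p}}\sum_{r=0}^{p-1}\zeta_p^{-rm}\,\SFell{1}{1}{p}{z}\strokeb{T_r}{1}.
\end{equation*}
For each cusp $\mathfrak{c}$ in the set $\mathcal{S}_p$ of Proposition \propo{cusps1}, choose $A_\mathfrak{c}\in\SL_2(\mathbb{Z})$ with $A_\mathfrak{c}\infty=\mathfrak{c}$, for instance $A_0=S$, $A_{1/n}=\begin{pmatrix}1 & 0\\ n & 1\end{pmatrix}$, and $A_{n/p}=\begin{pmatrix}n & b\\ p & d\end{pmatrix}$ with $nd-bp=1$. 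The invariant order of $\mathcal{K}_{p,m}^{(C)}$ at $\mathfrak{c}$ is then bounded below by $\min_{0\le r\le p-1}\ord_\infty\bigl(\SFell{1}{1}{p}{z}\strokeb{T_rA_\mathfrak{c}}{1}\bigr)$.

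The main step is to factor the determinant-$p$ matrix $T_rA_\mathfrak{c}=\gamma_r\,M_r$ with $\gamma_r\in\SL_2(\mathbb{Z})$ and $M_r$ upper-triangular of determinant $p$. Writing $A_\mathfrak{c}=\begin{pmatrix}a & b\\ c & d\end{pmatrix}$, a comparison of first columns shows that the \emph{generic} factorization $M_r=T_{r'}=\begin{pmatrix}1 & r'\\ 0 & p\end{pmatrix}$ succeeds precisely when $\gcd(a+rc,\,p)=1$, which excludes at most one residue class of $r$ modulo $p$; for the remaining \emph{exceptional} class (those $r$ with $p\mid a+rc$) one uses $M_r=\begin{pmatrix}p & r''\\ 0 & 1\end{pmatrix}$ instead. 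In the case $\mathfrak{c}=n/p$ the condition $\gcd(n+rp,p)=1$ holds for every $r$, so no exceptional case arises. Because the slash by $T_{r'}$ acts as $z\mapsto(z+r')/p$ (dividing the leading $q$-exponent by $p$) and the slash by $\begin{pmatrix}p & r''\\ 0 & 1\end{pmatrix}$ acts as $z\mapsto pz+r''$ (multiplying it by $p$), the order of the $r$-th summand equals $\tfrac{1}{p}\ord_{\gamma_r\infty}(\SFell{1}{1}{p}{z})$ in the generic case and $p\cdot\ord_{\gamma_r\infty}(\SFell{1}{1}{p}{z})$ in the exceptional case.

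It remains to evaluate $\ord_{\gamma_r\infty}(\SFell{1}{1}{p}{z})$. Writing $\SFell{1}{1}{p}{z}=\eta(p^2z)\,\eta(z)/E_{0,1}(z)$, the eta-factor contributions are supplied by Proposition \propo{etaord}, and the transformation of $E_{0,1}$ under $\gamma_r$ is given by Theorem \thm{genetaYang}(iv): up to unit factors, $E_{0,1}$ becomes $E_{g,h}$ with $(g,h)\equiv(0,1)\gamma_r\pmod{p}$, whose leading $q$-order at $\infty$ equals $\tfrac12 B(g/p)$ if $p\nmid g$ and $\tfrac{1}{12}$ if $p\mid g$. A direct calculation yields
\begin{equation*}
\ord_0\bigl(\SFell{1}{1}{p}{z}\bigr) = -\frac{(p-1)(p-11)}{24p^2},
\end{equation*}
and for $\mathfrak{c}=1/n$ the exceptional $\gamma_r$ (with second row $(n,\delta)$) sends $\infty$ to a cusp at which
\begin{equation*}
\ord\bigl(\SFell{1}{1}{p}{z}\bigr) = -\frac{1}{2p^2}\Bigl[\frac{p^2-1}{12}+n(n-p)\Bigr],
\end{equation*}
while for $\mathfrak{c}=n/p$ each $\gamma_r$ sends $\infty$ to a cusp of denominator $p^2$ at which $\SFell{1}{1}{p}{z}$ has order $\tfrac{p^2-1}{24}$. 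Multiplying these by $p$ (exceptional) or $\tfrac{1}{p}$ (generic) and taking the minimum over $r$ produces the three bounds (i)--(iii). The threshold $n<\tfrac{p}{2}-\tfrac12\sqrt{(2p^2+1)/3}$ of (ii) is exactly the range where $n(p-n)<\tfrac{p^2-1}{12}$, in which the exceptional contribution at cusp $1/n$ is strictly negative.

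The hardest part of the argument is the cusp bookkeeping: for each pair $(r,\mathfrak{c})$ one must identify the $\Gamma_0(p^2)\cap\Gamma_1(p)$-orbit of $\gamma_r\infty$ (whose cusp denominator can be $1$, $n$, $p$, or $p^2$, and which is generally different from the denominator of $\mathfrak{c}$ itself), apply the transformation formulas above to compute $\ord\bigl(\SFell{1}{1}{p}{z}\bigr)$ on this orbit, and verify that the stated minimum cannot be improved by cancellation across the weighted sum over $r$.
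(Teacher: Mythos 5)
Your proposal is correct and follows essentially the same route as the paper: the same $U_{p,m}$ expansion, the same factorization $T_rA=\gamma_r M_r$ into the generic case $M_r=T_{r'}$ (when $p\nmid a+rc$) and the single exceptional case with $p$ in the upper-left of $M_r$, the same $\tfrac1p$ versus $p$ scaling of orders, and the same order computations for the eta quotient and the generalized eta factor, yielding identical values at each cusp. The only cosmetic difference is that you read off the order of the transformed $E_{0,1}$ directly from Yang's transformation formula, whereas the paper routes the exceptional case through the $S$-conjugation of Theorem \thm{F2trans} and the catalogued orders of $\mathcal{F}_2$ in Proposition \propo{Fords}.
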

\begin{proof}
We derive lower bounds for $\ord\Lpar{\mathcal{S}_{p,m}(\zeta_p,z);\zeta}$ for each cusp $\zeta$ of $\Gamma_1(p)$ not equivalent to $i\infty$.
\\\\
We have
$$\mathcal{S}_{p,m}(\zeta_p,z)=q^{\frac{1}{24}}(1-\zeta_p)\,\stroke{\SFell{}{1}{p}{z}}{U_{p,m}}{1}=\frac{1}{\sqrt{p}}(1-\zeta_p)\sum\limits_{k=0}^{p-1}\zeta_p^{-km}\stroke{\SFell{}{1}{p}{z}}{T_k}{1}$$
where
\beq
\SFell{}{1}{p}{z} = \frac{\eta(p^2 z)}{\eta(z)}\, \Fell{}{1}{p}{z}.
\label{eq:SFelldef}
\eeq

We calculate 
$$
\stroke{\SFell{}{1}{p}{z}}{T_k\,A}{1},
$$
for each $0 \le k \le p-1$ and each $A=\AMat\in\SLZ$.

\subsubsection*{Case 1} 
\label{subsubsec:CuspConcase1}
$a+kc\not\equiv0\pmod{p}$. 
Choose $0\le k'\le p-1$ such that
$$
(a+kc)\, k' \equiv (b+kd) \pmod{p}.
$$
Then
$$
T_k \, \gA = C_k \, T_{k'},
$$
where
$$
C_k = T_k \, \gA \, T_{k'}^{-1} = 
\begin{pmatrix} a+ck & \tfrac{1}{p}(-k'(a+kc) + b+kd) \\
pc & d -k'c
\end{pmatrix} \in \Gamma_0(p).
$$
From Proposition \propo{F1trans1} we have
\begin{align}
\stroke{\SFell{}{1}{p}{z}}{T_k\,A}{1}
=
\stroke{\SFell{}{1}{p}{z}}{C_k\,T_{k'}}{1}
\label{eq:Tk1}\\
=
\Lpar{\stroke{F_p(z)}{C_k\,T_{k'}}{0}}\,
\Lpar{\mu(C_k,1)\,\stroke{\Fell{}{\overline{d-k'c}}{p}{z}}{T_{k'}}{1}},
\nonumber
\end{align}
where $\overline{x}$ implies that $x\in \mathbb{Z}$ is reduced modulo $p$, and
\beq
F_p(z) = \frac{\eta(p^2 z)}{\eta(z)}.
\eeq
\subsubsection*{Case 2} 
\label{subsubsec:CuspConcase2}
$a+kc\equiv0\pmod{p}$. In this case we find that
$$
T_k \, \gA = D_k \, P,
$$
where
$$
P = \begin{pmatrix} p & 0 \\ 0 & 1\end{pmatrix},
$$
$$
D_k = \begin{pmatrix} \frac{1}{p}(a+kc) & b + kd \\ c & pd \end{pmatrix}\in
\SL_2(\mathbb{Z}),
$$
and  
$$
E_k = D_k\,S = 
\begin{pmatrix}
b+kd  &  \frac{-1}{p}(a+kc)\\
pd & -c
\end{pmatrix}\in\Gamma_0(p). 
$$
From Proposition \propo{F1trans1} we have
$$
\stroke{\Fell{}{1}{p}{z}}{E_k}{1}
= \mu(E_k,1) \, \Fell{}{\overline{-c}}{p}{z}.
$$
By Theorem \thm{F2trans}(2) we have
$$
\stroke{\Fell{}{1}{p}{z}}{D_k}{1}
= \mu(E_k,1) \, \stroke{\Fell{}{\overline{-c}}{p}{z}}{S^{-1}}{1}
= \mu(E_k,1) \, \zeta_{2p} \, \Gell{}{\overline{c}}{p}{z},
$$
and
$$
\stroke{\Fell{}{1}{p}{z}}{T_k\,A}{1}
= \mu(E_k,1) \, \zeta_{2p} \, \stroke{\Gell{}{\overline{c}}{p}{z}}{P}{1},
$$
so that

\begin{equation}
\stroke{\SFell{}{1}{p}{z}}{T_k\,A}{1}
=
\Lpar{\stroke{F_p(z)}{D_k\,P}{0}}\,
\Lpar{\mu(E_k,1) \, \zeta_{2p} \, \stroke{\Gell{}{\overline{c}}{p}{z}}{P}{1}}.
\label{eq:Tk2}
\end{equation}

Now we are ready to exam each cusp $\zeta$ of $\Gamma_1(p)$.
We choose 
$$
\gA = \AMat \in \SL_2(\mathbb{Z}),\quad\mbox{so that $\gA(\infty) = \frac{a}{c}=\zeta$.}
$$

\subsubsection*{(i) $\zeta=0$}
Here $a=0$, $c=1$ and we assume $0 \le k \le p-1$.
If $k\ne0$ then applying \eqn{Tk1} we have
\begin{align*}
\ord\Lpar{\SFTk;0} = 
&= \frac{1}{p}\, \ord\Lpar{F_p(z);\frac{k}{p}} + 
  \frac{1}{p} \, \ord \Lpar{\Fell{}{\overline{d-k'c}}{p}{z}; i\infty}\\
&= 0 + 0 = 0,
\end{align*}
by Propositions \propo{etaord} and \propo{Fords}. Now applying \eqn{Tk2}
with $k=0$ we have
\begin{align*}
\ord\Lpar{\SFTZ;0}
&= p\, \ord\Lpar{F_p(z);0} + 
   p\, \ord \Lpar{\Gell{}{\overline{c}}{p}{z}; i\infty}\\
&= -\frac{1}{24p}(p^2-1) + p\left(\frac{1}{2p}-\frac{1}{2p^2}\right)
\\
&= -\frac{1}{24p}(p-1)(p-11)
\end{align*}
again by Propositions \propo{etaord} and \propo{Fords}. The result (i) follows since
$$
\ord\Lpar{\mathcal{S}_{p,m}(z);0} \ge \, \min_{0 \le k \le p-1} 
\ord\Lpar{\SFTk;0}.
$$
\subsubsection*{(ii) $\zeta=\frac{1}{n}$, where $2 \le n \le \tfrac{1}{2}(p-1)$}
Let $A=\begin{pmatrix}1 & 0 \\ n & 1 \end{pmatrix}$ so that $A(\infty)=1/n$.
If $kn\not\equiv-1\pmod{p}$ we apply \eqn{Tk1} with 
$C_k = \begin{pmatrix}1 + kn & * \\ pn & 1-k'n\end{pmatrix}$, and find that
\begin{align*}
\ord\Lpar{\SFTk;\frac{1}{n}}
&= \frac{1}{p}\, \ord\Lpar{F_p(z);\frac{1+kn}{pn}} + 
  \frac{1}{p} \, \ord \Lpar{\Fell{}{\overline{1-kn}}{p}{z}; i\infty}\\
&= 0 + 0 = 0.
\end{align*}
Now we assume $kn\equiv-1\pmod{p}$ and we will apply \eqn{Tk2}. We have
\begin{align*}
\ord\Lpar{\SFTk;\frac{1}{n}} &= p\, \ord\Lpar{F_p(z);\frac{(1+kn)/p}{n}} + 
   p\, \ord \Lpar{\Gell{}{n}{p}{z}; i\infty}
\\
&= -\frac{1}{24p}(p^2-1) + p\left(\frac{n\ell}{2p}-\frac{n^2\ell^2}{2p^2}\right) 
\\
&                              
\begin{cases}
= - \dfrac{1}{2p}\left[\dfrac{(p-1)(p+1)}{12}-n (p-n)\right]
&\mbox{if $2\le n <\tfrac{p}{2}-\tfrac{1}{2}\sqrt{\frac{2p^2+1}{52}}$},\\
\geq 0 & \mbox{otherwise},
\end{cases}
\end{align*}
and the result (ii) follows.

\subsubsection*{(iii) $\zeta=\frac{n}{p}$, where $2 \le n \le \tfrac{1}{2}(p-1)$}
Choose $b$ and $d$ so that $A=\begin{pmatrix}n & b \\ p & d \end{pmatrix}
\in \SL_2(\mathbb{Z})$ and 
$A(\infty)=n/p$. Since $n\not\equiv0\pmod{p}$ we may apply \eqn{Tk1} for each $k$. We find that
$C_k = \begin{pmatrix}n + kp & * \\ p^2 & d-k'p\end{pmatrix}$, and 
\begin{align*}
\ord\Lpar{\SFTk;\frac{n}{p}}
&= \frac{1}{p}\, \ord\Lpar{F_p(z);\frac{n+kp}{p^2}} + 
  \frac{1}{p} \, \ord \Lpar{\Fell{}{\overline{d-k'p}}{p}{z}; i\infty}\\
&= \frac{p^2-1}{24p} + 0 = \frac{p^2-1}{24p}.
\end{align*}
The result (iii) follows.
\end{proof}

\section{Crank theorem and crank mod $p$ identities}
\label{sec:crankids}

\subsection{Dyson's rank conjectures and the crank theorem}
10 years after they were proposed by Dyson \cite{Dy44}, Atkin and Swinnerton Dyer \cite{At-Sw} came up with a proof of the Dyson's rank conjectures stated below. 
\begin{drconj}[1944]
\label{conj:DRC}
For all nonnegative integers $n$,
\begin{align}
N(0,5,5n+4) &= N(1,5,5n+4) = \cdots  = N(4,5,5n+4) = \tfrac{1}{5}p(5n + 4),
\label{eq:Dysonconj5}\\
N(0,7,7n+5) &= N(1,7,7n+5) = \cdots  = N(6,7,7n+5) = \tfrac{1}{7}p(7n + 5).    
\label{eq:Dysonconj7}\\\nonumber
\end{align}
\end{drconj}

Garvan, in \cite{Ga19} notes that the proof in  \cite{At-Sw} involved finding and  proving identities for basic hypergeometric functions, theta-functions and Lerch-type series using the theory of elliptic functions. It also involved identifying the generating functions for rank differences $N(0,p,pn+r)-N(k,p,pn+r)$  for $p=5$, $7$ for each $ 1 \le k \le\tfrac{1}{2}(p-1)$ and each $r=0$, $1$,\dots $p-1$. Subsequently in his paper \cite[Section 6.3]{Ga19}, Garvan gives a new proof of these conjectures using the theory of modular forms, by calculating the orders of the rank dissection element $\mathcal{K}_{p,0}^{}(\zeta_{p},z)$ at the cusps of $\Gamma_1(p)$ for $p=5$ and $7$ and making use of the Valence formula. In this section, we employ Garvan's idea in \cite{Ga19} to give a new proof of the crank theorem modulo $11$ (Equation \eqn{crank11}) using the Valence formula and the orders of $\mathcal{S}_{11,0}^{}(\zeta_{11},z)$ at the cusps of $\Gamma_1(11)$ calculated in the previous section. It is easy to observe that in our setup, the crank theorem is equivalent to showing
\beq
\mathcal{S}_{11,0}^{}(\zeta_{11},z) = 0.
\label{eq:CK11}
\eeq

\begin{theorem}[The Valence Formula \cite{Ra}(p.98)]
\label{thm:val}
Let $f\ne0$ be a modular form of weight $k$ with respect to a subgroup $\Gamma$ of finite index
in $\Gamma(1)=\SL_2(\mathbb{Z})$. Then
\beq
\ORD(f,\Gamma) = \frac{1}{12} \mu \, k,
\label{eq:valform}
\eeq
where $\mu$ is index $\widehat{\Gamma}$ in $\widehat{\Gamma(1)}$ for $\widehat{\Gamma}=\Gamma/\{\pm I\}$,
$$
\ORD(f,\Gamma) := \sum_{\zeta\in R^{*}} \ORD(f,\zeta,\Gamma),
$$
$R^{*}$ is a fundamental region for $\Gamma$,
and
\begin{equation}
\label{eq:ORDdef}
\ORD(f;\zeta;\Gamma) = n(\Gamma;\zeta)\, \ord(f;\zeta),
\end{equation}
for a cusp $\zeta$ and
$n(\Gamma;\zeta)$ denotes the fan width of the cusp $\zeta \pmod{\Gamma}$.
\end{theorem}
\begin{remark}
For $\zeta\in\mathcal{H}$,
$\ORD(f;\zeta;\Gamma)$ is defined in terms of the invariant order $\ord(f;\zeta)$, which  is interpreted in the usual sense. See \cite[p.91]{Ra} for details of this and the notation used.
\end{remark}

In the following table, we give the order of $\mathcal{S}_{11,0}^{}(\zeta_{11},z)$ at the cusps using Theorem \thm{CKpmords}. Here $2\leq m \leq \frac{p-1}{2}$.

\begin{center}
\begin{tabular}{c c c c}
cusp $\zeta$ & $n(\Gamma_1(11);\zeta)$ & $\ord(\mathcal{S}_{11,0}^{}(\zeta_{11},z);\zeta)$ 
& $\ORD(\mathcal{S}_{11,0}^{}(\zeta_{11},z),\Gamma_1(11),\zeta)$ \\
$i\infty$      & $1$  & $\ge 1$             & $\ge 1$ \\
$0$            & $11$ & $\ge 0$             & $\ge 0$ \\
$\frac{1}{m}$  & $11$ & $> 0$               & $> 0$ \\
$\frac{m}{11}$ & $1$  & $\ge \frac{5}{11}$  & $\ge 1$ \\\\
\end{tabular}
\end{center}

An easy calculation by hand shows that $M(0,11,6)=\cdots = M(10,11,6)$. Expanding $\mathcal{S}_{11,0}^{}(\zeta_{11},z)$ we see that this implies the coefficient of $q^1$ is $0$ and thus we have that 
\\
$\ORD(\mathcal{S}_{11,0}^{}(\zeta_{11},z),\Gamma_1(11),i \infty) \geq 2$. Hence $\ORD(\mathcal{S}_{11,0}^{}(\zeta_{11},z);\Gamma_1(11)) \geq 6.$ But $\mu k = \frac{60}{12} = 5$. The Valence Formula implies that $\mathcal{S}_{11,0}^{}(\zeta_{11},z)$ is identically zero which proves the crank theorem.

\begin{remark}
The same method could be applied to prove the mod $5$ and $7$ and cases of the crank theorem i.e. Equations \eqn{crank5} and \eqn{crank7}.
\end{remark} 
\subsection{Crank mod $p$ identities}

In this section, we give identities for  $\mathcal{S}_{p,m}(\zeta_{p},z)$ when $p=13, 17$ and  $19$ in terms of generalized eta-functions given in Definition \refdef{geneta}. Crank mod $13$ identities were first studied by Bilgici in \cite{Bi13} similar to the work on rank mod $11$ identities by Atkin and Hussain \cite{At-Hu} and rank mod $13$ identities considered by O'Brien in his thesis \cite{OB-thesis}. In \cite{Ga-Ri}, we also gave such identities for elements of the the rank generating function. In general, these identities are of the form 
\beq
\mathcal{S}_{p,m}(\zeta_{p},z)= \sum\limits_{k=1}^r c_{p,m,k}\, j_{p,m,k}(z)
\label{eq:Kpmid2}
\eeq
where $j_{p,m,k}(z)$ are eta-quotients comprising of generalized eta-functions, and $c_{p,m,k}$ are cyclotomic coefficients. From Theorem \thm{Kpmthm}, we know that $\mathcal{S}_{p,m}(\zeta_{p},z)$ is a weakly holomorphic modular form of weight $1$ on $\Gamma(p)$.  The proof of the identities primarily involve establishing the equality using the Valence formula and showing that the RHS is also a weakly holomorphic modular form of weight $1$ on $\Gamma(p)$.  
\\\\
In \cite{Ga-Ri}, we derived conditions for an eta-quotient $j(p,\overrightarrow{n},z)$ to be a weakly holomorphic modular form of weight $1$ on $\Gamma(p)$. 
\begin{theorem}[\cite{Ga-Ri}, Theorem 3.1]\label{thm:modular}
Let $p, \overrightarrow{n}$ and $j(p,\overrightarrow{n},z)$ be as in Definition \refdef{geneta}.
Then $j(p,\overrightarrow{n},z)$ is a weakly holomorphic modular form of weight 1 on $\Gamma(p)$ satisfying the modularity condition $j(p,\overrightarrow{n},z)\stroke{}{A}{1}=\exp(\tfrac{2\pi i b m}{p})j(p,\overrightarrow{n},z)$ for $A=\begin{pmatrix} a & b \\ c & d\end{pmatrix} \in \Gamma_1(p)$ provided the following conditions are met :
\\
\begin{align*}
&(1)~n_0+\sum_{k=1}^{\frac{p-1}{2}}n_k=2,
\\
&(2)~n_0+3\sum_{k=1}^{\frac{p-1}{2}}n_k \equiv 0 \pmod{24}, 
\\
&(3)~\sum_{k=1}^{\frac{p-1}{2}}k^2 n_k \equiv 2m \pmod{p}.
\end{align*}
\end{theorem}

In Section 7 of \cite{Ga-Ri}, we developed an algorithm for proving rank mod $p$ identities that utilizes the Valence Formula. Here, we follow the steps of the algorithm to prove crank identities modulo $13$. 
\\\\
For the cases $p=17$ and $19$, we simply give the form of the identities omitting the values of the coefficients, for each of the cases $m=0$ and when $m$ is a quadratic residue and a non-residue modulo $p$. Their proofs follow the same technique using the algorithm as it does for $p=13$.
\\\\
To help calculate the order of the generalized eta functions at the cusps, we also need the following result from \cite[Prop.7.4]{Ga-Ri}.

\begin{prop}
\label{propo:ncuspORDS}
Let $p>3$ be prime and suppose $s=\frac{a}{c}$ is one of the cusps listed
in Proposition \propo{cusps1} with $i\infty$ represented by $\frac{1}{p}$.
Then 
\begin{enumerate}
\item[(i)]
If $(c,p)=1$ then
$$
\ORD\left(j(p,\overrightarrow{n},z),s,\Gamma_1(p)\right)
= \frac{1}{24}\left( n_0 + 3 \sum_{j=1}^{(p-1)/2} n_j\right).
$$
\item[(ii)] 
If $c=p$ then
$$
\ORD\left(j(p,\overrightarrow{n},z),s,\Gamma_1(p)\right)
= \frac{p}{24}\left( n_0 + 12 \sum_{j=1}^{(p-1)/2} 
n_j \left( \frac{aj}{p} - \FL{\frac{aj}{p}} - \frac{1}{2}\right)^2\right).
$$
\end{enumerate}
\end{prop}

\subsection{Crank mod 13 identities}

\subsubsection{\textbf{Identity for $\mathcal{S}_{13,0}^{}$}}

The following is an identity for $\mathcal{S}_{13,0}^{}(\zeta_{13},z)$ in terms of generalized eta-functions :
\\
\begin{align}
\mathcal{S}_{13,0}^{}(\zeta_{13},z)&=(q^{13};q^{13})_\infty \sum_{n=1}^\infty \Lpar{\sum_{k=0}^{12} M(k,13,13n-7)\,\zeta_{13}^k}q^n \label{eq:crank13id1} \\
\nonumber
\\
&=\sum_{r=1}^{6} c_{13,r}\,j(13,\pi_r(\overrightarrow{n_1}),z),
\nonumber
\end{align}
\\
where 
\begin{align*}
\overrightarrow{n_1}=(15,-1,-3,-2,-2,-2,-3),
\end{align*}\\
and the coefficients are :
\begin{align*}
c_{13,6} &= 2\, \zeta_{13}^{11}+\zeta_{13}^{9}+\zeta_{13}^{8}+\zeta_{13}^{7}+\zeta_{13}^{6}+\zeta_{13}^{5}+\zeta_{13}^{4}+2\,\zeta_{13}^{2}+2,
\\
c_{13,1} &= -(\zeta_{13}^{11} + \zeta_{13}^{10} + \zeta_{13}^7 + \zeta_{13}^6 + \zeta_{13}^3 + \zeta_{13}^2),
\\
c_{13,5} &= \zeta_{13}^{11}+\zeta_{13}^{10}+\zeta_{13}^{9}+\zeta_{13}^{8}+\zeta_{13}^{5}+\zeta_{13}^{4}+\zeta_{13}^{3}+\zeta_{13}^{2}+1,
\\
c_{13,2} &= \zeta_{13}^{10}-\zeta_{13}^{9}+ \zeta_{13}^{8}+ \zeta_{13}^{5}-\zeta_{13}^{4}+\zeta_{13}^{3}+1,
\\
c_{13,4} &=  -\left(\zeta_{13}^{10}+\zeta_{13}^{9}+\zeta_{13}^{7}+\zeta_{13}^{6}+\zeta_{13}^{4}+\zeta_{13}^{3}+2\right),
\\
c_{13,3} &= -\left(\zeta_{13}^{8}+\zeta_{13}^{5}\right).
\\
\end{align*}
We follow the steps of the aforementioned algorithm in the process of proving the identity.
\\
\begin{enumerate}
\item[Step 1] We check the conditions for modularity as in Theorem \thm{modular} for $j(13,\pi_r(\overrightarrow{n_1}),z), 1\leq r \leq 6$ involved in \eqn{crank13id1}. Here, $p=13, m=0, n_0=15$. In \cite{Ga-Ri}, Theorem 3.6 tells us that it suffices to check modularity for $r=1$. With $\overrightarrow{n_1}=(15,-1,-3,-2,-2,-2,-3)$, we easily see that 
\begin{align*}
n_0+\displaystyle \sum_{k=1}^{5}n_k&=2, 
\\
n_0+3 \displaystyle \sum_{k=1}^{5}n_k &= -24 \equiv 0\pmod {24},
\\
\displaystyle \sum_{k=1}^{5}k^2 n_k &= -221 \equiv 0 \pmod {13},
\end{align*}
as required.
\\\\
Since $m=0$, we skip Step 2 in accordance with our algorithm.
\\
\item[Step 3] Using Proposition \propo{ncuspORDS}, we calculate the orders of each of the six functions $f$ at each cusp of $\Gamma_1(13)$.\\

\begin{center}
$\ORD(f,\zeta,\Gamma_1(13))$\\
\begin{tabular}{c c c c c c c c c c}\\
&&&& cusp $\zeta$  \\
$f$ & $i\infty$ & $0$ & $1/n$ & $2/13$ & $3/13$ & $4/13$ & $5/13$ & $6/13$  \\
$j(13,\pi_1(\overrightarrow{n_1}),z)$ &  $3$  & $-1$ & $-1$  & $2$    & $3$    &  $2$   &  $2$   &  $1$   \\
$j(13,\pi_2(\overrightarrow{n_1}),z)$ &  $2$  & $-1$ & $-1$  & $2$    & $1$    &  $2$   &  $3$   &  $3$   \\
$j(13,\pi_3(\overrightarrow{n_1}),z)$ &  $3$  & $-1$ & $-1$  & $1$    & $2$    &  $3$   &  $2$   &  $2$   \\
$j(13,\pi_4(\overrightarrow{n_1}),z)$ &  $2$  & $-1$ & $-1$  & $2$    & $3$    &  $3$   &  $1$   &  $2$   \\
$j(13,\pi_5(\overrightarrow{n_1}),z)$ &  $2$  & $-1$ & $-1$  & $3$    & $2$    &  $1$   &  $3$   &  $2$   \\
$j(13,\pi_6(\overrightarrow{n_1}),z)$ &  $1$  & $-1$ & $-1$  & $3$    & $2$    &  $2$   &  $2$   &  $3$   
\end{tabular}
\end{center}
where $2\le n\le 6$.
\\
\item[Step 4] Considering the LHS of equation \eqn{crank13id1}, we calculate lower bounds $\lambda(13,0,\zeta)$ of orders $ORD\left(\mathcal{S}_{13,0}(\zeta_{13},z),\zeta,\Gamma_1(13)\right)$ at cusps $\zeta$ of of $\Gamma_1(13)$.
\\
\begin{center}
\begin{tabular}{c c c c c}
cusp $\zeta$ &   $\lambda(13,0,\zeta)$\\
$i\infty$    &           \\
$0$          &   $-1$    \\
$1/n$        &   $0$     \\
$n/13$       &   $\CL{7/13}=1$  \\
\end{tabular} 
\end{center} 
where $2\le n\le 6$ using Theorem \thm{CKpmords}. We note that each value is an integer.
\\
\item[Step 5] We summarize the calculations in Steps 4 and 5 of the algorithm (see \cite{Ga-Ri}, Section 7) in a Table.
The gives lower bounds for the LHS and RHS of equation \eqn{crank13id1},
at the cusps $\zeta$.
\begin{center}
\begin{tabular}{c c c c c}
cusp $\zeta$ & $n(\Gamma_1(13);\zeta)$ & $\ORD(LHS;\zeta)$ & $\ORD(RHS;\zeta)$ 
& $\ORD(LHS-RHS;\zeta)$ \\
$i\infty$    & $1$  &            &            &             \\
$0$          & $13$ &  $\ge-1$   &  $\ge-1$   &   $\ge -1$  \\
$1/n$        & $13$ &  $\ge0$    &  $\ge-1$   &   $\ge -1$  \\
$n/13$       & $1$  &  $\ge1$    &  $1$       &   $\ge1$    \\\\
\end{tabular} 
\end{center} 
where $2\le n\le 6$.
\\\\
The constant $B$ is the sum of the lower bounds in the last column, so that $B=-1$.
\\
\item[Step 6] The LHS and RHS are weakly holomorphic modular forms of weight $1$ on $\Gamma_1(13)$. So in the Valence Formula, $\frac{\mu k}{12} = 7$. The result follows provided we can show that $\ORD(LHS-RHS,i\infty,\Gamma_1(13))\geq 9$. This is easily verified using MAPLE (see \href{https://github.com/rishabh-sarma/crank-identities-modulo-13-17-19}{\textit{https://github.com/rishabh-sarma/crank-identities-modulo-13-17-19}}).
\end{enumerate}

\subsubsection{\textbf{A quadratic residue case}}

The following is an identity for $\mathcal{S}_{13,2}^{}(\zeta_{13},z)$ in terms of generalized eta-functions : 

\begin{align}
\mathcal{S}_{13,2}^{}(\zeta_{13},z)&=q^\frac{2}{13}(q^{13};q^{13})_\infty \Bigg(\sum_{n=1}^\infty \Lpar{\sum_{k=0}^{12} M(k,13,13n-5)\,\zeta_{13}^k}q^n\Bigg) \label{eq:crank13id2}\\
\nonumber
\\
&=\frac{f_{13,1}(z)}{f_{13,6}(z)}\sum_{k=-1}^{0}\sum_{r=1}^{6} \left(\dfrac{\eta(13z)}{\eta(z)}\right)^{2k}c_{13,r,k}\,j(13,\pi_r(\overrightarrow{n_1}),z),
\nonumber
\end{align}
where 
\begin{align*}
\overrightarrow{n_1}=(15,-1,-3,-2,-2,-2,-3),
\end{align*}
and the coefficients are :

\begin{align*}
c_{13,1,0} &= -(\zeta_{13}^{11}+\zeta_{13}^{9}+\zeta_{13}^{7}+\zeta_{13}^{6}+\zeta_{13}^{4}+\zeta_{13}^{2}+1),
\\
c_{13,2,0} &= 13\, (\zeta_{13}^{11}-\zeta_{13}^{9}-\zeta_{13}^{8}-\zeta_{13}^{5}-\zeta_{13}^{4}+\zeta_{13}^{2}-1),
\\
c_{13,3,0} &= -13\, (\zeta_{13}^{9}+\zeta_{13}^{4}),
\\
c_{13,4,0} &= \zeta_{13}^{11}+\zeta_{13}^{10}+\zeta_{13}^{8}+\zeta_{13}^{5}+\zeta_{13}^{3}+\zeta_{13}^{2}+2,
\\
c_{13,5,0} &= -(\zeta_{13}^{10}+\zeta_{13}^{7}+\zeta_{13}^{6}+\zeta_{13}^{3}),
\\
c_{13,6,0} &= -(\zeta_{13}^{11}+\zeta_{13}^{10}+\zeta_{13}^{9}+2\,\zeta_{13}^{8}+2\,\zeta_{13}^{7}+2\,\zeta_{13}^{6}+2\,\zeta_{13}^{5}+\zeta_{13}^{4}+\zeta_{13}^{3}+\zeta_{13}^{2}),
\\
~~\hspace{3mm}c_{13,1,-1} &= c_{13,2,-1} = c_{13,3,-1}= c_{13,4,-1}= c_{13,5,-1} = 0,
\\
c_{13,6,-1} &= -(\zeta_{13}^{10}+\zeta_{13}^{7}+\zeta_{13}^{6}+\zeta_{13}^{3}).\\
\end{align*}
We follow the steps of the algorithm in the process of proving the above identity.
\\
\begin{enumerate}
\item[Step 1] We check the conditions for modularity as in Theorem \thm{modular} for $\frac{f_{13,1}(z)}{f_{13,6}(z)}j(13,\pi_r(\overrightarrow{n_1}),z)$ and 
\\
$\frac{f_{13,1}(z)}{f_{13,6}(z)}\left(\frac{\eta(13z)}{\eta(z)}\right)^{-2}j(13,\pi_r(\overrightarrow{n_1}),z)$ involved in \eqn{crank13id2}. We first make note of two aspects of our functions. First, from Apostol's \cite[Theorem 4.9, p.87]{Ap76}, we observe that the eta quotient $\left(\frac{\eta(13z)}{\eta(z)}\right)^{2}$ is modular under the congruence subgroup $\Gamma_0(13)$. Also, writing the theta quotient  $\frac{f_{13,1}(z)}{f_{13,6}(z)}$ in its equivalent vector notation $\overrightarrow{n}=(n_0',n_1',n_2',\cdots ,n_6') =(0,1,0,0,0,0,-1)$ in accordance with our Definition \refdef{geneta}, we observe that it doesn't contribute to the weight and 
\begin{align*}
n_0'+3 \displaystyle \sum_{k=1}^{6}n_k' = 0\equiv 0 \pmod{24},
\\
\displaystyle \sum_{k=1}^{6}k^2 n_k'=-35\equiv 4 \pmod{13}. 
\end{align*}
Thus it suffices to check the first step for the functions $j(13,\pi_r(\overrightarrow{n_1}),z), 1 \leq r \leq 6$, which was achieved in Step 1 of the proof for the previous identity $\mathcal{S}_{13,0}^{}(\zeta_{13},z)$.
\\
\item[Step 2] Next, we calculate the orders of the generalized eta-functions at $i\infty$ and considering the identity with zero coefficients removed, we find that $k_0=1$. Thus we divide each generalized eta-function by $j_0=\frac{f_{13,1}(z)}{f_{13,6}(z)}\left(\frac{\eta(13z)}{\eta(z)}\right)^{-2}j(13,\pi_6(\overrightarrow{n_1}),z)$, which has the lowest order at $i\infty$. 
\\
\item[Step 3] Using Proposition \propo{ncuspORDS}, we calculate the orders of each of the six functions $f$ at each cusp of $\Gamma_1(13)$.\\

\begin{center}
$\ORD(f,\zeta,\Gamma_1(13))$\\
\begin{tabular}{c c c c c c c c c c c}\\
&&&& cusp $\zeta$\\
$f$ & $i\infty$ & $0$ & $1/n$ & $2/13$ & $3/13$ & $4/13$ & $5/13$ & $6/13$  \\
$\frac{f_{13,1}(z)}{f_{13,6}(z)}j(13,\pi_1(\overrightarrow{n_1}),z)/j_0$ &  $3$  & $-1$ & $-1$   & $0$    & $2$    &  $1$   &  $1$   &  $-1$   
\\
$\frac{f_{13,1}(z)}{f_{13,6}(z)}j(13,\pi_2(\overrightarrow{n_1}),z)/j_0$ &  $2$  & $-1$ & $-1$   & $0$    & $0$    &  $1$   &  $2$   &  $1$   
\\
$\frac{f_{13,1}(z)}{f_{13,6}(z)}j(13,\pi_3(\overrightarrow{n_1}),z)/j_0$ &  $3$  & $-1$ & $-1$   & $-1$   & $1$    &  $2$   &  $1$   &  $0$   
\\
$\frac{f_{13,1}(z)}{f_{13,6}(z)}j(13,\pi_4(\overrightarrow{n_1}),z)/j_0$ &  $2$  & $-1$ & $-1$   & $0$    & $2$    &  $2$   &  $0$   &  $0$   
\\
$\frac{f_{13,1}(z)}{f_{13,6}(z)}j(13,\pi_5(\overrightarrow{n_1}),z)/j_0$ &  $2$  & $-1$ & $-1$   & $1$    & $1$    &  $0$   &  $2$   &  $0$   
\\
$\frac{f_{13,1}(z)}{f_{13,6}(z)}j(13,\pi_6(\overrightarrow{n_1}),z)/j_0$ &  $1$  & $-1$ & $-1$   & $1$    & $1$    &   $1$  &  $1$   &  $1$   
\\  
$\frac{f_{13,1}(z)}{f_{13,6}(z)}\left(\frac{\eta(13z)}{\eta(z)}\right)^{-2}j(13,\pi_6(\overrightarrow{n_1}),z)/j_0$ &  $0$  & $0$ & $0$    &  $0$     & $0$   &   $0$     &  $0$   &  $0$              
\end{tabular}
\end{center}
where $2\le n\le 6$.
\\
\item[Step 4] Considering the LHS of equation \eqn{crank13id2} after division by $j_0$,
we now calculate lower bounds $\lambda(13,2,\zeta)$ of orders $ORD\left(\frac{\mathcal{S}_{13,2}^{}(\zeta_{13},z)}{j_0},\zeta,\Gamma_1(13)\right)$ at cusps $\zeta$ of $\Gamma_1(13)$.
\\
\begin{center}
\begin{tabular}{c c c c c}
cusp $\zeta$ & $\lambda(13,2,\zeta)$     \\
$i\infty$    &            \\
$0$          &   $-1$     \\
$1/n$        &   $0$      \\
$2/13$       &   $\CL{-14/13}=-1$ \\
$3/13$       &   $\CL{-11/13}=0$ \\
$4/13$       &   $\CL{1/13}=1$   \\
$5/13$       &   $\CL{-4/13}=0$  \\
$6/13$       &   $-1$     \\
\end{tabular} 
\end{center} 
where $2\le n\le 6$ using Theorem \thm{CKpmords}. We note that each value is an integer. 
\\
\item[Step 5] We summarize the calculations in Steps 4 and 5 in a Table.
The gives lower bounds for the LHS and RHS of equation \eqn{crank13id2} divided by $j_0$,
at the cusps $\zeta$.
\\
\begin{center}
\begin{tabular}{c c c c c}
cusp $\zeta$ & $n(\Gamma_1(13);\zeta)$ & $\ORD(LHS;\zeta)$ & $\ORD(RHS;\zeta)$ 
& $\ORD(LHS-RHS;\zeta)$ \\
$i\infty$    & $1$  &            &          &             \\
$0$          & $13$ &  $\ge-1$   &  $\ge-1$ &   $\ge -1$  \\
$1/n$        & $13$ &  $0$       &  $\ge-1$ &   $\ge -1$  \\
$2/13$       & $1$  &  $\ge-1$   &  $-2$    &   $\ge -2$ \\ 
$3/13$       & $1$  &  $\ge0$    &  $-1$    &   $\ge -1$ \\ 
$4/13$       & $1$  &  $\ge1$    &  $-1$    &   $\ge -1$  \\ 
$5/13$       & $1$  &  $\ge0$    &  $-1$    &   $\ge -1$ \\
$6/13$       & $1$  &  $\ge-1$   &  $-2$    &   $\ge -2$ \\
\end{tabular} 
\end{center} 
where $2\le n\le 6$.
\\\\
The constant $B$ is the sum of the lower bounds in the last column, so that $B=-9$.
\\
\item[Step 6] The LHS and RHS are weakly holomorphic modular forms of weight $0$ on $\Gamma_1(13)$. So in the Valence Formula, $\frac{\mu k}{12} = 0$. The result follows provided we can show that $\ORD(LHS-RHS,i\infty,\Gamma_1(13))\geq 14$. This is easily verified using MAPLE (see \href{https://github.com/rishabh-sarma/crank-identities-modulo-13-17-19}{\textit{https://github.com/rishabh-sarma/crank-identities-modulo-13-17-19}}).
\end{enumerate}

\subsubsection{\textbf{A quadratic non-residue case}}

The following is an identity for $\mathcal{S}_{13,1}^{}(\zeta_{13},z)$ in terms of generalized eta-functions : 
\\
\begin{align}
\mathcal{S}_{13,1}^{}(\zeta_{13},z)&=q^\frac{1}{13}(q^{13};q^{13})_\infty \sum_{n=1}^\infty \Lpar{\sum_{k=0}^{12} M(k,13,13n-6)\,\zeta_{13}^k}q^n \label{eq:crank13id3} \\
\nonumber
\\
&=\frac{f_{13,1}(z)}{f_{13,5}(z)}\sum_{k=-1}^{0}\sum_{r=1}^{6} \left(\dfrac{\eta(13z)}{\eta(z)}\right)^{2k}c_{13,r,k}\,j(13,\pi_r(\overrightarrow{n_1}),z),
\nonumber
\end{align}
where 
\begin{align*}
\overrightarrow{n_1}=(15,-1,-3,-2,-2,-2,-3),
\end{align*}
\\
and the coefficients $c_{13,r,k}$, $1\leq r \leq 6, -1\leq k \leq 0$ are linear combinations of cyclotomic integers like the identities found previously. We do not include them here, but the exact identity along with the maple code can be found here : \href{https://github.com/rishabh-sarma/crank-identities-modulo-13-17-19}{\textit{https://github.com/rishabh-sarma/crank-identities-modulo-13-17-19}}.
Utilizing the same algorithm, the proof follows similar to the identity for the quadratic residue case $\mathcal{S}_{13,2}^{}(\zeta_{13},z)$ done previously. We omit the details here.
\\\\
Below, we present one identity for each of the quadratic residue, quadratic non-residue and $m=0$ cases for $p=17$ and $p=19$. These are new and do not seem to appear in the literature elsewhere. We however make note of the fact that the same set of theta functions $j$ and vectors $\overrightarrow{n}$ are involved in the corresponding identity for the rank (\cite{Ga-Ri}, Subsections 7.3 and 7.4). However, there are much fewer functions in the corresponding identity for the crank here. We do not include the cyclotomic coefficients here, but the exact identities alongwith the maple code can be found here : \href{https://github.com/rishabh-sarma/crank-identities-modulo-13-17-19}{\textit{https://github.com/rishabh-sarma/crank-identities-modulo-13-17-19}}. Several of these coefficients are zero.

\subsection{Crank mod 17 identities}

\subsubsection{\textbf{Identity for $\mathcal{S}_{17,0}^{}$}}

The following is an identity for $\mathcal{S}_{17,0}^{}(\zeta_{17},z)$ in terms of generalized eta-functions :
\\
\begin{align}
\mathcal{S}_{17,0}^{}(\zeta_{17},z)&=(q^{17};q^{17})_\infty \sum_{n=1}^\infty \Lpar{\sum_{k=0}^{16} M(k,17,17n-12)\,\zeta_{17}^k}q^n \label{eq:crank17id1} \\
\nonumber
\\
\nonumber
&=\sum_{k=0}^{1}\sum_{r=1}^{8} \left(\dfrac{\eta(17z)}{\eta(z)}\right)^{3k}\left(c_{17,r,k}\,j(17,\pi_r(\overrightarrow{n_1}),z)+
d_{17,r,k}\,j(17,\pi_r(\overrightarrow{n_2}),z)\right),
\nonumber
\end{align}
\\
where 
\begin{align*}
\overrightarrow{n_1}=(15,-3,-1,-2,-1,-2,-1,-2,-1)
\\
\overrightarrow{n_2}=(27,-2,-2,-3,-2,-4,-4,-4,-4),
\end{align*}
and the coefficients $c_{17,r,k}, d_{17,r,k}$, $1\leq r \leq 8, 0\leq k \leq 1$ are linear combinations of cyclotomic integers like the mod $11$ and $13$ identities found previously. 

\subsubsection{\textbf{A quadratic residue case}}

The following is an identity for $\mathcal{S}_{17,12}^{}(\zeta_{17},z)$ in terms of generalized eta-functions :
\\
\begin{align}
\mathcal{S}_{17,12}^{}(\zeta_{17},z)&=q^\frac{12}{17}(q^{17};q^{17})_\infty \Bigg(\sum_{n=1}^\infty \Lpar{\sum_{k=0}^{16} M(k,17,17n)\,\zeta_{17}^k}q^n \Bigg)\label{eq:crank17id2} \\
\nonumber
\\
&=\frac{f_{17,7}(z)}{f_{17,5}(z)}\sum_{k=-1}^{1}\sum_{r=1}^{8} \left(\dfrac{\eta(17z)}{\eta(z)}\right)^{3k}\left(c_{17,r,k}\,j(17,\pi_r(\overrightarrow{n_1}),z)\,+
d_{17,r,k}\,j(17,\pi_r(\overrightarrow{n_2}),z)\right),
\nonumber
\\
\nonumber
\end{align}
where
\begin{align*}
&\overrightarrow{n_1}=(15,-3,-1,-2,-1,-2,-1,-2,-1),
\\
&\overrightarrow{n_2}=(27,-2,-2,-3,-2,-4,-4,-4,-4),
\end{align*}
and the coefficients $c_{17,r,k}, d_{17,r,k}$, $1\leq r \leq 8, -1\leq k \leq 1$ are linear combinations of cyclotomic integers like the identities found previously. 

\subsubsection{\textbf{A quadratic non-residue case}}
The following is an identity for $\mathcal{S}_{17,1}^{}(\zeta_{17},z)$ in terms of generalized eta-functions :
\\
\begin{align}
\mathcal{S}_{17,1}^{}(\zeta_{17},z)&=q^{\frac{1}{17}}(q^{17};q^{17})_\infty \sum_{n=1}^\infty \Lpar{\sum_{k=0}^{16} M(k,17,17n-11)\,\zeta_{17}^k}q^n \label{eq:crank17id3} \\
\nonumber
\\
&=\frac{f_{17,7}(z)}{f_{17,8}(z)}\sum_{k=-1}^{1}\sum_{r=1}^{8} \left(\dfrac{\eta(17z)}{\eta(z)}\right)^{3k}\left(c_{17,r,k}\,j(17,\pi_r(\overrightarrow{n_1}),z)
+ d_{17,r,k}\,j(17,\pi_r(\overrightarrow{n_2}),z)\right),
\nonumber
\end{align}
where
\begin{align*}
&\overrightarrow{n_1}=(15,-3,-1,-2,-1,-2,-1,-2,-1),
\\
&\overrightarrow{n_2}=(27,-2,-2,-3,-2,-4,-4,-4,-4),
\end{align*}
and the coefficients $c_{17,r,k}, d_{17,r,k}$, $1\leq r \leq 8, -1\leq k \leq 1$ are linear combinations of cyclotomic integers like the identities found previously. 

\subsection{Crank mod 19 identities}

\subsubsection{\textbf{Identity for $\mathcal{S}_{19,0}^{}$}}

The following is an identity for $\mathcal{S}_{19,0}^{}(\zeta_{19},z)$ in terms of generalized eta-functions :
\\
\begin{align}
\mathcal{S}_{19,0}^{}(\zeta_{19},z)&=(q^{19};q^{19})_\infty \sum_{n=1}^\infty \Lpar{\sum_{k=0}^{18} M(k,19,19n-15)\,\zeta_{19}^k}q^n \label{eq:crank19id1} \\
\nonumber
\\
\nonumber
&=\sum_{k=0}^{1}\sum_{r=1}^{9} \left(\dfrac{\eta(19z)}{\eta(z)}\right)^{4k}\Big(c_{19,r,k}\,j(19,\pi_r(\overrightarrow{n_1}),z)+d_{19,r,k}\,j(19,\pi_r(\overrightarrow{n_2}),z)+
\\
\nonumber
&\hspace{45mm}e_{19,r,k}\,j(19,\pi_r(\overrightarrow{n_3}),z)\Big),
\end{align}
\\
where 
\begin{align*}
\overrightarrow{n_1}=(27,-3,-2,-4,-4,-3,-3,-2,-3,-1),
\\
\overrightarrow{n_2}=(39,-5,-2,-5,-5,-3,-5,-2,-5,-5),
\\
\overrightarrow{n_3}=(39,-5,-4,-3,-4,-5,-4,-4,-5,-3),
\end{align*}
and the coefficients $c_{19,r,k}, d_{19,r,k}$, and $e_{19,r,k}$ for $1\leq r \leq 9, 0\leq k \leq 1$ are linear combinations of cyclotomic integers like the mod $11, 13$ and $17$ identities found previously. 

\subsubsection{\textbf{A quadratic residue case}}

The following is an identity for $\mathcal{S}_{19,15}^{}(\zeta_{19},z)$ in terms of generalized eta-functions :
\\
\begin{align}
\mathcal{S}_{19,15}^{}(\zeta_{19},z)&=q^\frac{15}{19}(q^{19};q^{19})_\infty \Bigg(\sum_{n=1}^\infty \Lpar{\sum_{k=0}^{18} M(k,19,19n)\,\zeta_{19}^k}q^n\Bigg) \label{eq:crank19id2}\\
\nonumber
\\
&=\frac{f_{19,6}(z)}{f_{19,5}(z)}\Bigg(\sum_{k=-1}^{1}\sum_{r=1}^{9} \left(\dfrac{\eta(19z)}{\eta(z)}\right)^{4k}\Big(c_{19,r,k}\,j(19,\pi_r(\overrightarrow{n_1}),z)+d_{19,r,k}\,j(19,\pi_r(\overrightarrow{n_2}),z)
\nonumber
\\
\nonumber
&\hspace{63mm}+e_{19,r,k}\,j(19,\pi_r(\overrightarrow{n_3}),z)\Big)\Bigg),
\nonumber
\\
\nonumber
\end{align}
where
\begin{align*}
\overrightarrow{n_1}=(39, -5, -5, -4, -5, -4, -5, -3, -5, -1),
\\
\overrightarrow{n_2}=(39, -3, -5, -5, -5, -3, -2, -4, -5, -5),
\\
\overrightarrow{n_3}=(39, -4, -5, -4, -3, -3, -4, -5, -5, -4),
\end{align*}
and the coefficients $c_{19,r,k}, d_{19,r,k}, e_{19,r,k}$, $1\leq r \leq 9, -1\leq k \leq 1$ are linear combinations of cyclotomic integers like the identities found previously. 

\subsubsection{\textbf{A quadratic non-residue case}}
The following is an identity for $\mathcal{S}_{19,1}^{}(\zeta_{19},z)$ in terms of generalized eta-functions :
\\
\begin{align}
\mathcal{S}_{19,1}^{}(\zeta_{19},z)&=q^{\frac{1}{19}}(q^{19};q^{19})_\infty \sum_{n=1}^\infty \Lpar{\sum_{k=0}^{18} M(k,19,19n-14)\,\zeta_{19}^k}q^n \label{eq:crank19id3} \\
\nonumber
\\
&=\frac{f_{19,8}(z)}{f_{19,9}(z)}\Bigg(\sum_{k=-1}^{1}\sum_{r=1}^{9} \left(\dfrac{\eta(19z)}{\eta(z)}\right)^{4k}\Big(c_{19,r,k}\,j(19,\pi_r(\overrightarrow{n_1}),z)+d_{19,r,k}\,j(19,\pi_r(\overrightarrow{n_2}),z)
\nonumber
\\
\nonumber
&\hspace{63mm}+e_{19,r,k}\,j(19,\pi_r(\overrightarrow{n_3}),z)\Big)\Bigg),
\nonumber
\nonumber
\end{align}
where
\begin{align*}
\overrightarrow{n_1}=(39, -5, -5, -4, -5, -4, -5, -3, -5, -1),
\\
\overrightarrow{n_2}=(39, -3, -5, -5, -5, -3, -2, -4, -5, -5),
\\
\overrightarrow{n_3}=(39, -4, -5, -4, -3, -3, -4, -5, -5, -4),
\end{align*}
and the coefficients $c_{19,r,k}, d_{19,r,k}, e_{19,r,k}$, $1\leq r \leq 9, -1\leq k \leq 1$ are linear combinations of cyclotomic integers like the identities found previously.

\section{Concluding remarks}
\label{sec:conclusion}

Building on our work on symmetries of the rank generating function \cite{Ga-Ri}, we have found analogous results for the crank generating function in this paper. The transformation and modularity properties as well as the $p$-dissection identities are similar for the rank and crank functions. It would be interesting to examine the underlying theory behind this resemblance more closely. Two other natural problems to consider are finding crank dissection identities for higher primes $p\geq23$ and also exploring such symmetries for other partition statistics and also their overpartition analogues.

\section{Acknowledgments}
\label{sec:acknowledgements}
The author would like to thank his advisor Frank Garvan for suggesting this problem and his support and guidance throughout the course of this project.

\bibliographystyle{amsplain}

\end{document}